\renewcommand{\m}{\mathfrak{m}}
\renewcommand{\n}{\mathfrak{n}}
\numberwithin{equation}{theorem}
\newcommand{\length}{\ell}
\begin{document}

\title{Lim Ulrich sequences and Lech's conjecture}
\author{Linquan Ma}
\address{Department of Mathematics\\ Purdue University\\  West Lafayette\\ IN 47907}
\email{ma326@purdue.edu}

\thanks{The author is partially supported by NSF Grant DMS \#1901672, NSF FRG Grant \#1952366, and a fellowship from the Sloan Foundation.}

\maketitle

\begin{abstract}
The long standing Lech's conjecture in commutative algebra states that for a flat local extension $(R,\m)\to (S,\n)$ of Noetherian local rings, we have an inequality on the Hilbert--Samuel multiplicities: $e(R)\leq e(S)$. In general the conjecture is wide open when $\dim R>3$, even in equal characteristic. In this paper, we prove Lech's conjecture in all dimensions, provided $(R,\m)$ is a standard graded ring over a perfect field localized at the homogeneous maximal ideal.

We introduce the notions of lim Ulrich and weakly lim Ulrich sequences. Roughly speaking these are sequences of finitely generated modules that are not necessarily Cohen--Macaulay, but asymptotically behave like Ulrich modules. We prove that the existence of these sequences imply Lech's conjecture. Though the existence of Ulrich modules is known in very limited cases, we construct weakly lim Ulrich sequences for all standard graded domains over perfect fields of positive characteristic.
\end{abstract}

\section{Introduction}

Around 1960, Lech made the following remarkable conjecture on the Hilbert--Samuel multiplicities of Noetherian local rings \cite{Lechnoteonmultiplicity}:

\begin{Conjecture*}[Lech's conjecture]
Let $(R,\m)\to (S,\n)$ be a flat local extension of Noetherian local rings. Then $e(R)\leq e(S)$.
\end{Conjecture*}

%This conjecture has remained open in most cases (despite the fact that it is simple to state). In \cite{Lechnoteonmultiplicity,Lechinequalitiesofflatcouples} the conjecture is proven when $\dim R\leq 2$ or when the special fiber $S/\m S$ is a complete intersection. In \cite{MaLechconjectureindimensionthree} the conjecture is settled when $\dim R=3$ and $R$ has equal characteristic. For some other partial progress and related results on Lech's conjecture, see \cite{HerzogBKodairaSpencermap,HanesThesis,HanesCohenMacaulaymodulesGradedrings,MaThesis,MaLechconjectureindimensionthree}. The main result of this paper settles Lech's conjecture for a {\it large} class of rings in {\it arbitrary} dimension.

This conjecture was proved by Lech \cite{Lechnoteonmultiplicity,Lechinequalitiesofflatcouples} when $\dim R\leq 2$, and also when the ring $S/\m S$ is a complete intersection. In \cite{MaLechconjectureindimensionthree}, the author proved the conjecture whenever $\dim R \leq 3$ and $R$ contains a field. Besides for these families, the conjecture has remained essentially open; see \cite{HerzogBKodairaSpencermap,HanesThesis,HanesLengthapproximationsforIndependentlygeneratedideals,HanesCohenMacaulaymodulesGradedrings,MaThesis,MaLechconjectureindimensionthree,MengStronglyLechindependent}.

The main result of this paper settles Lech's conjecture for a large class of rings of arbitrary dimension.

\begin{theoremA*}[=\autoref{thm:MainLech}]
Let $(R,\m)\to (S,\n)$ be a flat local extension of Noetherian local rings. Suppose $(R,\m)$ is a standard graded ring over a perfect field localized at the homogeneous maximal ideal. Then $e(R)\leq e(S)$.
\end{theoremA*}

We recall that an $\mathbb{N}$-graded ring over a field $k$ is {\it standard graded} if it is generated over $k$ by degree one forms. It is worth pointing out that in Theorem A, the ring $S$ need not be a localization of a graded ring.

Our main ingredient in the proof of Theorem A is a notion called a {\it (weakly) lim Ulrich sequence}, which is a special type of a {\it (weakly) lim Cohen--Macaulay sequence} developed by Bhatt, Hochster and the author in \cite{BhattHochsterMaLimCohenMacaulaysequence}, see also \cite{HochsterHomologicalConjectureLimCM}. %We refer to Section 2 for their detailed definitions.

%Roughly speaking, a sequence of finitely generated modules of maximal dimension is lim Cohen--Macaulay (resp., weakly lim Cohen--Macaulay) if the lengths of their higher Koszul homology modules (resp., their first higher Euler characteristics) with respect to a system of parameters grow relative slowly compared to their minimal number of generators. Furthermore, a (weakly) lim Cohen--Macaulay sequence is called (weakly) lim Ulrich if their minimal number of generators are asymptotically equal to their Hilbert--Samuel multiplicities. Clearly, a small (i.e., finitely generated maximal) Cohen--Macaulay module induces a constant lim Cohen--Macaulay sequence, and an Ulrich module\footnote{That is, a small Cohen--Macaulay module whose minimal number of generators equals to its Hilbert--Samuel multiplicity \cite{UlrichGorensteinringsandModuleswithHighgenerators}.} induces a constant lim Ulrich sequence.

Roughly speaking, a sequence $\{M_n\}_{n\geq0}$ of finitely generated $R$-modules is lim Cohen--Macaulay (resp., weakly lim Cohen--Macaulay) if $\dim M_n=\dim R$ and the lengths of the higher Koszul homology modules of $M_n$ (resp., the first higher Euler characteristics of $M_n$) with respect to a system of parameters of $R$ grow relatively slowly compared to the minimal number of generators of $M_n$. A (weakly) lim Cohen--Macaulay sequence $\{M_n\}_{n\geq0}$ is (weakly) lim Ulrich if the minimal number of generators of $M_n$ is asymptotically, as $n\to\infty$, equal to the Hilbert--Samuel multiplicitiy of $M_n$.

In what follows, by a small Cohen--Macaulay $R$-module we mean a finitely generated maximal Cohen--Macaulay $R$-module. An Ulrich module is a small Cohen--Macaulay module whose minimal number of generators is equal to its Hilbert--Samuel multiplicity. When $M$ is a small Cohen--Macaulay module (resp., an Ulrich module), the constant sequence $\{M_n=M\}$ is lim Cohen--Macaulay (resp., lim Ulrich).

%Clearly, a small (i.e., finitely generated maximal) Cohen--Macaulay module induces a constant lim Cohen--Macaulay sequence, and an Ulrich module\footnote{That is, a small Cohen--Macaulay module whose minimal number of generators equals to its Hilbert--Samuel multiplicity \cite{UlrichGorensteinringsandModuleswithHighgenerators}.} induces a constant lim Ulrich sequence.

One of the main results in \cite{BhattHochsterMaLimCohenMacaulaysequence} is that the existence of lim Cohen--Macaulay sequences implies Serre's conjecture on positivity of intersection multiplicities \cite{SerreLocalAlgebra}, which greatly extends the earlier observation that the existence of small Cohen--Macaulay modules implies Serre's conjecture \cite{HochsterCMmodules}. Similarly, it was an earlier observation of Hochster--Huneke and Hanes that the existence of Ulrich modules implies Lech's conjecture, see \cite{HanesThesis}. We generalize this idea and prove the following

\begin{theoremB*}[=\autoref{thm:limUlrichimpliesLech}]
Let $(R,\m)\to (S,\n)$ be a flat local extension of Noetherian local rings such that $R$ is a domain. If $R$ admits a weakly lim Ulrich sequence, then $e(R)\leq e(S)$.
\end{theoremB*}

%Since Ulrich modules were introduced in \cite{UlrichGorensteinringsandModuleswithHighgenerators}, a fundamental open question is whether they exist. One difficulty is that we do not know the existence of small Cohen--Macaulay modules. However, even if we restrict ourselves to Cohen--Macaulay rings, the existence of Ulrich modules is not known (even in dimension two or graded dimension three!). In fact, only in some very limited cases (e.g., rings with strong combinatorial properties) could we establish the existence of Ulrich modules, and the method is usually difficult: for example see \cite{HerzogUlrichBacklinLinearMCMoverStrictcompleteintersections} or \cite{EisenbudSchreyerWeymanResultantsChowforms}.

Ulrich modules were introduced in \cite{UlrichGorensteinringsandModuleswithHighgenerators}, under the name maximally generated maximal Cohen--Macaulay modules. However the existence of such modules is known only in a few special cases; for example, strict complete intersections \cite{HerzogUlrichBacklinLinearMCMoverStrictcompleteintersections}, and rings with strong combinatorial properties such as generic determinantal rings \cite{BrunsRomerWiebeDeterminantalUlrich} and Veronese subrings of polynomial rings \cite{EisenbudSchreyerWeymanResultantsChowforms,SaccochiThesis}. One of the difficulties is that we do not know the existence of small Cohen--Macaulay modules. But even over Cohen--Macaulay rings, the existence of Ulrich modules is only known when the dimension of the ring is at most one (or at most two in the standard graded case, see \cite{BrennanHerzogUlrichMaximallygeneratedCohenMacaulayModules,EisenbudSchreyerWeymanResultantsChowforms}).

%The main contribution of this paper shows that, on the other hand, weakly lim Ulrich sequences always exist for standard graded rings of positive characteristic. This vastly improves, in a certain sense, our understanding of Ulrich-like modules, and it leads to the aforementioned result on Lech's conjecture in characteristic $p>0$. The characteristic $0$ case of Theorem A then follows from reduction to positive characteristic.

The main contribution of this paper is to prove that, in contrast with Ulrich modules, weakly lim Ulrich sequences exist over any standard graded ring over a perfect field of characteristic $p>0$. This leads to the aforementioned result on Lech's conjecture in positive characteristic. The characteristic $0$ case of Theorem A then follows from reduction to characteristic $p>0$.

\begin{theoremC*}[=\autoref{thm:MainLimUlrich}]
Let $(R,\m)$ be a Noetherian standard graded domain over an infinite $F$-finite field of characteristic $p>0$ localized at the homogeneous maximal ideal. Then $R$ admits a weakly lim Ulrich sequence.
\end{theoremC*}

 It should be pointed out that even when $(R,\m)$ is Cohen--Macaulay, the modules constructed in the weakly lim Ulrich sequence in Theorem C need not be small Cohen--Macaulay. Thus it is very important that we allow the weakly lim Cohen--Macaulay property, that is, to control the asymptotic behavior of the higher Koszul homology modules (rather than requiring them to be zero).

\subsection*{Further developments on (weakly) lim Ulrich sequences} Since the preliminary version of this paper was released on the arXiv, there has been some further progress on (and applications of) lim Ulrich sequences. We summarize some major results:

\begin{itemize}
\item In \cite{IyengarMaWalkerMultiplicity}, we introduced and studied a version of lim Ulrich sequences of coherent sheaves. We generalized \autoref{thm:MainLimUlrich} using sheaf cohomology and computed the class in the Grothendieck group of certain lim Ulrich sequences, see \autoref{rmk:limUlrich}. We used these results to obtain various bounds on Betti numbers and Dutta multiplicities.
\item In \cite{IyengarMaWalkerBS}, we use lim Ulrich sequences to prove that over any standard graded Cohen--Macaulay ring, the cone of Betti tables of graded modules of finite length and finite projective dimension is the same as the corresponding cone in the case of polynomial rings. This was proved in \cite{EisenbudErmanJEMS} when the ring has an Ulrich module. We then obtain a version of the ``multiplicity conjecture" (see \cite[Corollary 0.3]{EisenbudSchreyerJAMS}) for arbitrary standard graded rings.
\item In \cite{YheeUlrichDoNotExist}, Yhee construted a two-dimensional complete local domain that does not admit an Ulrich module or even a weakly lim Ulrich sequence. The example is neither normal nor Cohen--Macaulay (in fact, the $S_2$-ification of the ring in the example is regular). Thus we still hope for the existence of lim Ulrich sequences for normal or Cohen--Macaulay rings, see \autoref{question:existence}.
\end{itemize}

\subsection*{Notations and Conventions} Throughout the rest of this paper, all rings are commutative, Noetherian, with multiplicative identity. We use $\nu_R(M)$ to denote the minimal number of generators of an $R$-module $M$ and $\length_R(M)$ to denote the length of $M$. %we will drop the subscript and write $\length(M)$ when $R$ is clear from the context. %For other unexplained terms, we will follow standard texts in commutative algebra such as \cite{BrunsHerzogCM,Eisenbud95}.  %We use $e(I, M)$ to denote the Hilbert--Samuel multiplcity of $M$ with respect to an $\m$-primary ideal $I\subseteq R$, $H_i(\underline{x}, M)$ to denote the Koszul homology module of $M$ with respect to a system of parameters $\underline{x}$, and $\chi_1(\underline{x}, M):=\sum_{i=1}^{d}(-1)^{i-1}\length(H_i(\underline{x}, M))$ to denote the first higher Euler characteristic. For basic properties on Hilbert--Samuel multiplicities and higher Euler characteristics, we refer to \cite{SerreLocalAlgebra,Eisenbud95,HunekeSwansonBook}. For other unexplained terms, we will follow standard texts in commutative algebra such as \cite{BrunsHerzogCM,Eisenbud95}. %We also point out that \cite[Section 2]{MaLechconjectureindimensionthree} contains all the preliminary notions needed for this paper.

\subsection*{Acknowledgements:} Many ideas of this manuscript originate from \cite{BhattHochsterMaLimCohenMacaulaysequence}, I would like to thank Bhargav Bhatt and Mel Hochster for initiating this collaboration. In particular, I thank Mel Hochster for valuable discussions on various weak notions of lim Cohen--Macaulay sequences. I would also like to thank David Eisenbud, Ray Heitmann, Srikanth Iyengar, Bernd Ulrich and Mark Walker for their comments on this manuscript. Finally, I would like to thank the anonymous referees for their very detailed comments and suggestions.

%This paper can be read independent of \cite{BhattHochsterMaLimCohenMacaulaysequence}

\section{Weakly lim Cohen--Macaulay and weakly lim Ulrich sequences}

In this section we introduce lim Ulrich and weakly lim Ulrich sequences. We begin by collecting some basic facts about Hilbert--Samuel multiplicities and connections with Euler characteristics of Koszul complexes, see \cite[Page 99--101]{SerreLocalAlgebra}, \cite[Chapter 11]{HunekeSwansonBook}, and \cite[Chapter I 4.6 and 4.7]{BrunsHerzogCM} for more details.

Let $(R,\m)$ be a local ring of dimension $d$ and $I\subseteq R$ be an $\m$-primary ideal. Let $M$ be a finitely generated $R$-module. The {\it Hilbert-Samuel multiplicity} of $M$ with respect to $I$ can be defined as:
$$e(I, M)=\lim_{t\to\infty}d!\cdot\frac{\ell_R(M/I^tM)}{t^d}.$$
In the case $M=R$ and $I=\m$, we will write $e(R)$ for $e(\m, R)$. In general, $e(I,M)$ is always an integer and is positive if and only if $\dim M=d$. The multiplicity $e(I, -)$ is additive on short exact sequences. It follows that if $M$ has a prime filtration with factors $\{R/P_i\}$, then $e(I, M)=\sum_ie(I, R/P_i)$. In particular, if $R$ is a domain then $e(I, M)=\rank_R(M)\cdot e(R)$.

If two $\m$-primary ideals $I$ and $J$ in $R$ have the same integral closure, then $e(I, M)=e(J, M)$. This often reduces the computation of multiplicities to the case where $I$ is a parameter ideal: when $R/\m$ is an infinite field, every $\m$-primary ideal $I$ is integral over an ideal generated by a system of parameters $\underline{x}=x_1,\dots,x_d$, which is called a {\it minimal reduction} of $I$. It follows that $e(I, M)=e(\underline{x}, M)$, and the latter can be computed using the Euler characteristic of the Koszul complex on $\underline{x}$: $$e(\underline{x}, M)=\chi(\underline{x}, M):=\chi(K_\bullet(\underline{x}, M))=\sum_{i=0}^d(-1)^i\ell_R(H_i(\underline{x}, M)).$$ We will use this formula repeatedly throughout this article. Let us also mention that one defines the higher Euler characteristic by $$\chi_j(\underline{x}, M)=\sum_{i=j}^d(-1)^{j-i}\length_R(H_i(\underline{x}, M)).$$

We note that, to define the (higher) Euler characteristic $\chi(\underline{y}, M)$ or $\chi_j(\underline{y}, M)$ for a sequence of elements $\underline{y}$ in $R$, we only need that $\length_R(M/(\underline{y})M)<\infty$ (but $\underline{y}$ need not be a system of parameters of $R$). The following lemma on Euler characteristic will be used.
\begin{lemma}
\label{lem:EulerChar}
Let $(R,\m)$ be a local ring and $y, z \in \m$. Let $N$ be a finitely generated $R$-module such that $\length_R(N/(yz)N)<\infty$. Then we have $\chi(yz, N)=\chi(y, N)+\chi(z, N)$.
\end{lemma}
\begin{proof}
We can replace $R$ and $N$ by their completions, since this will not change $\chi$. Let $V$ be a coefficient ring of $R$ and we can view $N$ as a module over the regular ring $A=V[[y,z]]$ with $\length_A(N/(yz)N)=\length_R(N/(yz)N)<\infty$. Since $A$ and $R$ have the same residue field, the Euler characteristic computed over $A$ and $R$ are the same, and the desired formula follows from the additivity of $\chi^A(-,N)$ applied to the short exact sequence $0\to A/y\to A/yz\to A/z\to 0$ (see \cite[Page 107]{SerreLocalAlgebra}).
\end{proof}

Our definitions of (weakly) lim Ulrich sequences depend on the notion of (weakly) lim Cohen--Macaulay sequences introduced and developed in \cite{HochsterHomologicalConjectureLimCM, BhattHochsterMaLimCohenMacaulaysequence}.

For two functions $f(n), g(n)$: $\mathbb{N}\to \mathbb{R}_{\geq0}$, we write $f(n)=o(g(n))$ if $\lim\limits_{n\shortrightarrow\infty}\frac{f(n)}{g(n)}=0$.

\begin{definition}
Let $(R,\m)$ be a local ring of dimension $d$. A sequence of finitely generated $R$-modules $\{M_n\}$ is called {\it weakly lim Cohen--Macaulay}, if $\dim M_n=d$ for all $n$ and there exists a system of parameters $\underline{x}=x_1,\dots,x_d$ of $R$ such that $$\chi_1(\underline{x}, M_n)=o(\nu_R(M_n)).$$
The sequence $\{M_n\}$ is called  {\it lim Cohen--Macaulay}, if $\dim M_n=d$ for all $n$ and for all $i\geq 1$, $$\length_R(H_i(\underline{x}, M_n))=o(\nu_R(M_n)).$$
\end{definition}

\begin{remark}
There exist weakly lim Cohen--Macaulay sequences that are {\it not} lim Cohen--Macaulay, see \cite[paragraph before Conjecture 10.1]{HochsterHomologicalConjectureLimCM}.
\end{remark}

We prove some basic facts about weakly lim Cohen--Macaulay sequences.

\begin{lemma}
\label{lem:limCMequivalentforms}
Let $(R,\m)$ be a local ring of dimension $d$. Then a sequence of finitely generated modules $\{M_n\}$ is weakly lim Cohen--Macaulay if and only if there exists a system of parameters $\underline{x}$ of $R$ such that
$$\lim_{n\to\infty}\frac{e(\underline{x}, M_n)}{\length_R(M_n/(\underline{x})M_n)}=1.$$ % \hspace{0.5em} \text{or equivalently, } \lim_{n\to\infty}\frac{\chi_1(\underline{x}, M_n)}{\length_R(M_n/(\underline{x})M_n)}=0.
\end{lemma}
\begin{proof}
First of all we have $$\nu_R(M_n)\leq\length_R(M_n/(\underline{x})M_n)\leq \nu_R(M_n)\cdot\length_R(R/(\underline{x})).$$ Thus it doesn't matter whether we use $\length_R(M_n/(\underline{x})M_n)$ or $\nu_R(M_n)$ in the definition of weakly lim Cohen--Macaulay sequence, i.e., $\{M_n\}$ is weakly lim Cohen--Macaulay if and only if $\chi_1(\underline{x}, M_n)=o(\length_R(M_n/(\underline{x})M_n))$. The lemma follows by noting that \[e(\underline{x}, M_n)=\chi(\underline{x}, M_n)=\length_R(M_n/(\underline{x})M_n)-\chi_1(\underline{x}, M_n). \qedhere \]
\end{proof}

\begin{lemma}
\label{lem:limCMrank}
Let $(R,\m)$ be a local domain of dimension $d$. If $\{M_n\}$ is weakly lim Cohen--Macaulay, then there exists a constant $C$ such that for all $n$,
\begin{equation}
\label{eqn:rankVSnu}
\rank_R(M_n)\leq \nu_R(M_n)\leq C\cdot \rank_R(M_n).
\end{equation}
In particular, we can use $\rank_R(M_n)$ in place of $\nu_R(M_n)$ in the definition of lim Cohen--Macaulay and weakly lim Cohen--Macaulay sequences.
\end{lemma}
\begin{proof}
Since $R$ is a domain, we have that
$$\rank_R(M_n)\cdot e(\underline{x}, R)=e(\underline{x}, M_n)=\length_R(M_n/(\underline{x})M_n)-\chi_1(\underline{x}, M_n)\geq \nu_R(M_n)-\chi_1(\underline{x}, M_n).$$
Dividing by $\nu_R(M_n)$ we obtain that
$$\frac{\rank_R(M_n)\cdot e(\underline{x}, R)}{\nu_R(M_n)}\geq 1-\frac{\chi_1(\underline{x}, M_n)}{\nu_R(M_n)}.$$
Since $\{M_n\}$ is weakly lim Cohen--Macaulay, the right hand side tends to $1$ when $n\to\infty$. Thus there exists $\epsilon>0$ such that for all $n$ sufficiently large, $$\nu_R(M_n)\leq (1+\epsilon) e(\underline{x}, R)\rank_R(M_n).$$
We now simply pick $C\gg(1+\epsilon) e(\underline{x}, R)$ that also works for all small values of $n$.

To see the last statement, note that if we know $\length_R(H_i(\underline{x}, M_n))=o(\rank_R(M_n))$ for all $i\geq 1$ (resp., $\chi_1(\underline{x}, M_n)=o(\rank_R(M_n))$), then clearly $\{M_n\}$ is lim Cohen--Macaulay (resp., weakly lim Cohen--Macaulay) since $\rank_R(M_n)\leq \nu_R(M_n)$. Conversely, if $\{M_n\}$ is (weakly) lim Cohen--Macauly, then by \autoref{eqn:rankVSnu} we know that $o(\rank_R(M_n))$ and $o(\nu_R(M_n))$ are equivalent. This completes the proof.
\end{proof}

In \cite{BhattHochsterMaLimCohenMacaulaysequence}, it is proved that the definition of lim Cohen--Macaulay sequence is independent of the choice of the system of parameters (see also \cite[Lemma 5.7]{IyengarMaWalkerMultiplicity}). Here we prove the analogous statement for weakly lim Cohen--Macaulay sequence.

\begin{proposition}
\label{prop:independengofsop}
Let $(R,\m)$ be a local ring of dimension $d$. If $\{M_n\}$ is a weakly lim Cohen-Macaulay sequence, then
\begin{equation}
\label{eqn:dagger}
\chi_1(\underline{x}, M_n)=o(\nu_R(M_n))
\tag{\dag}
\end{equation}
%\lim_{n\to\infty}\frac{e(\underline{x}, M_n)}{\length_R(M_n/(\underline{x})M_n)}=1, \hspace{0.5em} \text{or equivalently, } \lim_{n\to\infty}\frac{\chi_1(\underline{x}, M_n)}{\length_R(M_n/(\underline{x})M_n)}=0
for every system of parameters $\underline{x}$ of $R$.
\end{proposition}
\begin{proof}
We first note that if \autoref{eqn:dagger} holds for $\underline{x}=x_1,\dots,x_d$, then it holds for $\underline{x}^{\underline{t}}=x_1^{t_1},\dots,x_d^{t_d}$. Since for every finitely generated $R$-module $M$, we have $e(\underline{x}^{\underline{t}}, M)=(t_1\cdots t_d)\cdot e(\underline{x}, M)$ while $\length_R(M/(\underline{x}^{\underline{t}})M)\leq (t_1\cdots t_d)\cdot\length_R(M/(\underline{x})M)$, therefore $$\chi_1(\underline{x}^{\underline{t}}, M_n)= \length_R(M_n/(\underline{x}^{\underline{t}})M_n) - e(\underline{x}^{\underline{t}}, M_n)\leq (t_1\cdots t_d)\chi_1(\underline{x}, M_n)=o(\nu_R(M_n)).$$

We next note that given two system of parameters $\underline{x}=x_1,\dots,x_d$ and $\underline{y}=y_1,\dots,y_d$ of $R$, we can always connect $\underline{x}, \underline{y}$ by a chain of system of parameters such that each two consecutive only differ by one element. Thus it suffices to show that if \autoref{eqn:dagger} holds for $x, x_2,\dots, x_d$, then it holds for $y, x_2,\dots,x_d$. By the discussion in the first paragraph we can replace $x$ by $x^t$ for $t\gg0$ to assume that $(x,x_2,\dots,x_d)\subseteq (y, x_2,\dots,x_d)$, and thus by a change of variables we may assume $x=yz$. Thus it is enough to prove that if \autoref{eqn:dagger} holds for $yz, x_2,\dots,x_d$, then it holds for $y, x_2,\dots,x_d$.

From now on, we use $x^-$ to denote $x_2,\dots,x_d$. For every finitely generated $R$-module $M$, we have
\small
\begin{align*}
  \length_R(M/(yz, x^-)M)-\chi_1((yz, x^-), M) & =e((yz,x^-), M) \\
   & \leq e(yz, M/(x^-)M)=\length_R(M/(yz, x^-)M)-\length_R(\Ann_{M/x^-M}yz),
\end{align*}
\normalsize
where the inequality above follows from \cite[Lemma 11.1.7]{HunekeSwansonBook} (here $e(yz, M/(x^-)M)$ denotes the multiplicity computed over the one-dimensional ring $R/(x^-)$).
It follows that %$\length_R(\Ann_{M/x^-M}yz)\leq \chi_1((yz, x^-), M)$. In particular, we have
\begin{equation}
\label{equation 0}
\length_R(\Ann_{M/x^-M}yz)\leq \chi_1((yz, x^-), M).
\end{equation}
In particular, since we assume \autoref{eqn:dagger} holds for $(yz, x^-)$, we have
$\length_R(\Ann_{M_n/x^-M_n}yz)=o(\nu_R(M_n)).$
Since $\Ann_{M_n/x^-M_n}y$ and $\Ann_{M_n/x^-M_n}z$ are submodules of $\Ann_{M_n/x^-M_n}yz$, we have
\begin{equation}
\label{equation 1}
\length_R(\Ann_{M_n/x^-M_n}y)=o(\nu_R(M_n)) \hspace{1em} \text{and} \hspace{1em} \length_R(\Ann_{M_n/x^-M_n}z)=o(\nu_R(M_n)).
%\lim_{n\to\infty}\frac{\length_R(\Ann_{M_n/x^-M_n}y)}{\nu_R(M_n)}= 0 \text{ and } \lim_{n\to\infty}\frac{\length_R(\Ann_{M_n/x^-M_n}z)}{\nu_R(M_n)}= 0.
\end{equation}

At this point, we look at the long exact sequence of the Koszul homology:
\small
\begin{align*}
0\to H_d((yz, x^-), M)&\to H_{d-1}(x^-, M) \xrightarrow{yz} H_{d-1}(x^-, M) \to H_{d-1}((yz, x^-), M) \to \\
&\to H_{d-2}(x^-, M) \xrightarrow{yz} H_{d-2}(x^-, M) \to H_{d-2}((yz, x^-), M) \to\\
&\to\cdots\cdots \\
&\to H_1(x^-, M) \xrightarrow{yz} H_1(x^-, M) \to H_1((yz, x^-), M) \to \Ann_{M/x^-M}yz \to 0.
\end{align*}
\normalsize
Recall that if $N$ is any finitely generated $R$-module and $w\in R$ is such that $\length_R(N/wN)<\infty$, then $\chi(w,N)=\length_R(N/wN)-\length_R(\Ann_Nw)$. Thus taking the alternating sum of lengths in the long exact sequence, we get:
\begin{equation}
\label{equation 2}
\sum_{j=1}^{d-1}(-1)^{j-1}\chi(yz, H_j(x^-, M))=\chi_1((yz, x^-), M)-\length_R(\Ann_{M/x^-M}yz) \geq 0,
\end{equation}
where the last inequality follows from \autoref{equation 0}. The same argument shows that
\begin{equation}
\label{equation 3}
\sum_{j=1}^{d-1}(-1)^{j-1}\chi(y, H_j(x^-, M))=\chi_1((y, x^-), M)-\length_R(\Ann_{M/x^-M}y) \geq 0, \text{ and }
\end{equation}
\begin{equation}
\label{equation 4}
\sum_{j=1}^{d-1}(-1)^{j-1}\chi(z, H_j(x^-, M))=\chi_1((z, x^-), M)-\length_R(\Ann_{M/x^-M}z) \geq 0.
\end{equation}
Since we assume \autoref{eqn:dagger} holds for $(yz, x^-)$, applying \autoref{equation 2} and \autoref{equation 0} for each $M\in \{M_n\}$ shows that
\begin{equation}
\label{equation 5}
\sum_{j=1}^{d-1}(-1)^{j-1}\chi(yz, H_j(x^-, M_n)) = o(\nu_R(M_n)).
%\lim_{n\to\infty}\frac{\sum_{j=1}^{d-1}(-1)^{j-1}e(yz, H_j(x^-, M_n))}{\nu_R(M_n)}\leq \length_R(R/(yz,x^-))\cdot \lim_{n\to\infty}\frac{\chi_1((yz, x^-), M_n)}{\length_R(M_n/(yz, x^-)M_n)}=0.
\end{equation}
Applying \autoref{lem:EulerChar} to \autoref{equation 5}, we obtain that
\begin{equation}
\label{equation 6}
  \sum_{j=1}^{d-1}(-1)^{j-1}\chi(y, H_j(x^-, M)) + \sum_{j=1}^{d-1}(-1)^{j-1}\chi(z, H_j(x^-, M)) = o(\nu_R(M_n)).
\end{equation}
Now by \autoref{equation 3}, \autoref{equation 4} and \autoref{equation 6}, we know that % the non-negativity of $\chi_1$ (see \cite[Theorem 4.7.10]{BrunsHerzogCM} or \cite[Appendice II]{SerreLocalAlgebra}), we have
\small
\begin{equation}
\label{equation 7}
\sum_{j=1}^{d-1}(-1)^{j-1}\chi(y, H_j(x^-, M)) = o(\nu_R(M_n)) \text{ and } \sum_{j=1}^{d-1}(-1)^{j-1}\chi(z, H_j(x^-, M))=o(\nu_R(M_n)). %\lim_{n\to\infty}\frac{\sum_{j=1}^{d-1}(-1)^{j-1}e(y, H_j(x^-, M_n))}{\nu_R(M_n)}=\lim_{n\to\infty}\frac{\chi_1((y, x^-), M_n)}{\nu_R(M_n)}\geq 0, \text{ and }
\end{equation}
\normalsize
Finally, plugging in \autoref{equation 7} and \autoref{equation 1} into \autoref{equation 3} and \autoref{equation 4}, we find that
$$
\chi_1((y, x^-), M) = o(\nu_R(M_n)) \hspace{1em} \text{and} \hspace{1em} \chi_1((y, x^-), M)=o(\nu_R(M_n))
%\lim_{n\to\infty}\frac{\sum_{j=1}^{d-1}(-1)^{j-1}e(z, H_j(x^-, M_n))}{\nu_R(M_n)}=\lim_{n\to\infty}\frac{\chi_1((z, x^-), M_n)}{\nu_R(M_n)}\geq 0.
$$
which is what we wanted.
\end{proof}
\begin{comment}
%Finally, we recall that $e(yz, N)=e(y, N)+e(z, N)$ for any finitely generated $R$-module $N$ such that $\length(N/(yz)N)<\infty$.\footnote{To see this, we can complete $R$ and $N$. Let $V$ be a coefficient ring of $R$ and we can view $N$ as a module over the regular ring $A=V[[y,z]]$ with $\length(N/(yz)N)<\infty$. Since the multiplicity is the same as the Euler characteristic computed over $A$, the desired formula follows from the additivity of $\chi^A(-,N)$ applied to the short exact sequence $0\to A/y\to A/yz\to A/z\to 0$.} Thus by \autoref{equation 5}, we know that
%\begin{align*}
%  0 & =\lim_{n\to\infty}\frac{\sum_{j=1}^{d-1}(-1)^{j-1}e(yz, H_j(x^-, M_n))}{\nu_R(M_n)} \\
%   & =\lim_{n\to\infty}\frac{\sum_{j=1}^{d-1}(-1)^{j-1}e(y, H_j(x^-, M_n))}{\nu_R(M_n)}+\lim_{n\to\infty}\frac{\sum_{j=1}^{d-1}(-1)^{j-1}e(z, H_j(x^-, M_n))}{\nu_R(M_n)}.
%\end{align*}
%But by \autoref{equation 6} and \autoref{equation 7}, the last two limits above are both non-negative, hence they are both zero. But then by \autoref{equation 6} and \autoref{equation 7} again, we have
%$$\lim_{n\to\infty}\frac{\chi_1((y, x^-), M_n)}{\nu_R(M_n)}=\lim_{n\to\infty}\frac{\chi_1((z, x^-), M_n)}{\nu_R(M_n)}=0.$$
%Therefore by \autoref{lem:limCMequivalentforms}, \autoref{eqn:dagger} holds for the system of parameters $(y, x^-)$. The last conclusion follows from \autoref{lem:limCMequivalentforms}. This finishes the proof.
\end{comment}

\begin{lemma}
\label{lem:limCMflatextension}
Let $(R,\m)\to (S,\n)$ be a flat local extension of local rings such that $S/\m S$ has finite length. If $\{M_n\}$ is a (weakly) lim Cohen--Macaulay sequence for $R$, then $\{M_n\otimes_RS\}$ is a (weakly) lim Cohen--Macaulay sequence for $S$.
\end{lemma}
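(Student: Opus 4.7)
The plan is to transfer the witnessing system of parameters from $R$ to $S$, then exploit the fact that Koszul homology commutes with flat base change, so that all the relevant numerical invariants change by a uniform multiplicative constant.

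First I would observe that since $R \to S$ is flat with $\dim R = \dim S = d$, the fiber dimension equals $\dim S - \dim R = 0$, hence $\m S$ is $\n$-primary and $c := \length_S(S/\m S)$ is a finite positive constant. Consequently, if $\underline{x} = x_1, \dots, x_d$ is a system of parameters of $R$ witnessing the (weakly) lim Cohen--Macaulay property of $\{M_n\}$, then its image in $S$ is again a system of parameters of $S$, so it is a legitimate candidate to test the (weak) lim Cohen--Macaulay property on $S$.

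Next I would use the standard fact that Koszul homology commutes with flat base change: for each $n$ and each $i$,
\[
H_i(\underline{x}, M_n \otimes_R S) \;\cong\; H_i(\underline{x}, M_n) \otimes_R S.
\]
Since $\underline{x}$ is a system of parameters of $R$, each $H_i(\underline{x}, M_n)$ has finite length over $R$, and by flatness
\[
\length_S\!\bigl(H_i(\underline{x}, M_n) \otimes_R S\bigr) \;=\; c \cdot \length_R\!\bigl(H_i(\underline{x}, M_n)\bigr).
\]
Summing with alternating signs gives the analogous identity $\chi_1(\underline{x}, M_n \otimes_R S) = c \cdot \chi_1(\underline{x}, M_n)$.

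Finally, I would verify that $\nu$ is essentially unchanged: using $M_n/\m M_n \otimes_{R/\m} S/\n \cong (M_n \otimes_R S)/\n(M_n \otimes_R S)$, we obtain $\nu_S(M_n \otimes_R S) = \nu_R(M_n)$. Combining everything, $o(\nu_R(M_n))$-bounds on $\length_R(H_i(\underline{x}, M_n))$ (resp.\ $\chi_1(\underline{x}, M_n)$) translate into $o(\nu_S(M_n \otimes_R S))$-bounds on $\length_S(H_i(\underline{x}, M_n \otimes_R S))$ (resp.\ $\chi_1(\underline{x}, M_n \otimes_R S)$), which is exactly the (weakly) lim Cohen--Macaulay condition for $\{M_n \otimes_R S\}$ over $S$. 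There is no real obstacle here; the only point that uses the hypothesis $\dim R = \dim S$ in an essential way is the verification that $\underline{x}$ remains a system of parameters in $S$ (equivalently, that $c$ is finite), and without this, the Koszul lengths would no longer even be finite.
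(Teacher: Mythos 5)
Your argument is correct and is essentially the paper's own proof: both rest on the two identities $\length_S(H_i(\underline{x}, M_n\otimes_R S))=\length_S(S/\m S)\cdot\length_R(H_i(\underline{x}, M_n))$ and $\nu_S(M_n\otimes_R S)=\nu_R(M_n)$, which the paper states without elaboration and you justify via flat base change of Koszul homology. The extra details you supply (that $\m S$ is $\n$-primary so $\underline{x}$ remains a system of parameters, and the d\'evissage behind the length formula) are exactly the right ones.
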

\begin{proof}
First of all we have $\nu_S(M_n\otimes_RS)=\nu_R(M_n)$. Secondly, since $(R,\m)\to (S,\n)$ is flat local with $S/\m S$ of finite length, we have
$$\length_S(H_i(\underline{x}, M_n\otimes_RS))=\length_R(H_i(\underline{x}, M_n))\cdot \length_S(S/\m S).$$
Hence the result follows.
\end{proof}

We need the following important consequence of \autoref{prop:independengofsop} and \autoref{lem:limCMflatextension}.

\begin{corollary}
\label{cor:limCMmultiplicitylargerthengenerators}
Let $(R,\m)$ be a local ring of dimension $d$ and let $\{M_n\}$ be a weakly lim Cohen--Macaulay sequence. Then
$$e(R)\geq \limsup_{n\to\infty}\frac{e(\m, M_n)}{\nu_R(M_n)} \geq \liminf_{n\to\infty}\frac{e(\m, M_n)}{\nu_R(M_n)}\geq1.$$
\end{corollary}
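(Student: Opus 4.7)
The plan is to exploit the freedom given by Proposition \ref{prop:independengofsop}: since \eqref{eqn:dagger} holds for \emph{every} system of parameters, I can choose a system of parameters that is as close as possible to $\m$ itself and deduce the inequality from the trivial bound $\nu(M_n) \le \length(M_n/(\underline{x})M_n)$.

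Concretely, the first step is to use the hypothesis that the residue field $R/\m$ is infinite to pick $\underline{x} = x_1,\dots,x_d \in \m$ generating a minimal reduction of $\m$; such a reduction exists by standard Northcott--Rees theory, and its generators automatically form a system of parameters. Because $(\underline{x})$ is a reduction of $\m$, one has $e(\m, M_n) = e(\underline{x}, M_n)$ for every finitely generated module $M_n$ (the reduction property is preserved when passing to $M_n$). This is the one place the infinite residue field hypothesis enters.

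Next, for this particular $\underline{x}$, Proposition \ref{prop:independengofsop} applies, giving
$$\lim_{n\to\infty}\frac{e(\underline{x}, M_n)}{\length(M_n/(\underline{x})M_n)}=1.$$
On the other hand, since $\underline{x}\subseteq \m$, the surjection $M_n/(\underline{x})M_n \twoheadrightarrow M_n/\m M_n$ yields $\length(M_n/(\underline{x})M_n) \ge \length(M_n/\m M_n) = \nu(M_n)$. Combining,
$$\frac{e(\m, M_n)}{\nu(M_n)} = \frac{e(\underline{x}, M_n)}{\nu(M_n)} \ge \frac{e(\underline{x}, M_n)}{\length(M_n/(\underline{x})M_n)},$$
and passing to the limit gives $\liminf_{n\to\infty} e(\m, M_n)/\nu(M_n) \ge 1$, as desired.

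There is no real obstacle once Proposition \ref{prop:independengofsop} is in hand; the only subtlety is noticing that the proposition's independence of the choice of system of parameters is exactly what lets us trade the arbitrary $\underline{x}$ provided by the definition of weakly lim Cohen--Macaulay for a minimal reduction of $\m$, which is what makes the numerator equal to $e(\m, M_n)$. The existence of the minimal reduction (needing the infinite residue field hypothesis) is the only external input.
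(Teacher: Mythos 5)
Your proof is correct and is essentially identical to the paper's: both choose a minimal reduction $\underline{z}$ of $\m$ (using the infinite residue field), invoke \autoref{prop:independengofsop} to get the weakly lim Cohen--Macaulay estimate for that particular system of parameters, and conclude via $e(\m,M_n)=e(\underline{z},M_n)$ together with the trivial bound $\nu(M_n)\leq \length(M_n/(\underline{z})M_n)$. The only cosmetic difference is that you phrase the key input as the ratio form of \autoref{eqn:dagger} while the paper uses $\chi_1(\underline{z},M_n)=o(\nu(M_n))$; these are equivalent by \autoref{lem:limCMequivalentforms}.
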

\begin{proof}
The first inequality follows since we can map $R^{\oplus \nu_R(M_n)}$ onto $M_n$ and thus $e(\m, M_n)\leq e(R)\cdot \nu_R(M_n)$ for each $n$. The second inequality is obvious.

Now we prove the third inequality. Set $R'=R[t]_{\m R[t]}$ and $M_n'=M_n\otimes_RR'$. Since $R\to R'$ is a faithfully flat extension with $R'/\m R'$ a field, by \autoref{lem:limCMflatextension}, $\{M_n'\}$ is a weakly lim Cohen--Macaulay sequence over $R'$. It is clear that $e(\m, M)=e(\m R', M\otimes_RR')$ and $\nu_R(M)=\nu_{R'}(M\otimes_RR')$ for every finitely generated $R$-module $M$. Therefore by replacing $R$ by $R'$ and $\{M_n\}$ by $\{M_n'\}$, we may assume that $R$ has an infinite residue field. Thus there exists a system of parameters $\underline{z}=z_1,\dots,z_d$ of $R$ that is a minimal reduction of $\m$ (see \cite[Corollary 4.6.10]{BrunsHerzogCM}). Since $\{M_n\}$ is weakly lim Cohen--Macaulay, by \autoref{prop:independengofsop} we know that $\chi_1(\underline{z}, M_n)=o(\nu_R(M_n))$. Therefore
\[\liminf_{n\to\infty}\frac{e(\m, M_n)}{\nu_R(M_n)}=\liminf_{n\to\infty}\frac{e(\underline{z}, M_n)}{\nu_R(M_n)}=\liminf_{n\to\infty}\frac{\length_R(M_n/(\underline{z})M_n)}{\nu_R(M_n)}\geq 1. \qedhere\]
\end{proof}

Finally, we introduce lim Ulrich and weakly lim Ulrich sequences.

\begin{definition}
\label{def:limUlrich}
Let $(R,\m)$ be a local ring of dimension $d$. A sequence of finitely generated $R$-modules $\{U_n\}$ is called {\it lim Ulrich} (resp., {\it weakly lim Ulrich}) if it is lim Cohen--Macaulay (resp., weakly lim Cohen--Macaulay) and $$\lim_{n\to\infty}\frac{e(\m, U_n)}{\nu_R(U_n)}= 1.$$
\end{definition}

\begin{remark}
\label{rmk:limsup}
\begin{enumerate}
  \item We caution the readers that, unlike the lim Cohen--Macaulay property, we cannot replace $\nu_R(U_n)$ by $\rank_R(U_n)$ in \autoref{def:limUlrich} when $R$ is a domain.
  \item By \autoref{cor:limCMmultiplicitylargerthengenerators}, to check $\{U_n\}$ is (weakly) lim Ulrich, it is enough to show $\{U_n\}$ is (weakly) lim Cohen--Macaulay and that $\limsup\limits_{n\to\infty}\frac{e(\m, U_n)}{\nu_R(U_n)} \leq 1.$
\end{enumerate}

\end{remark}

The following is the main result of this section.

\begin{theorem}
\label{thm:limUlrichimpliesLech}
Let $(R,\m)\to (S,\n)$ be a flat local extension of local rings such that $R$ is a domain. Suppose $R$ admits a weakly lim Ulrich sequence $\{U_n\}$. Then $e(R)\leq e(S)$.
\end{theorem}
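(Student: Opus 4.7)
The plan is to compute $e(\n, U_n\otimes_R S)$ in two ways: exactly via the associativity formula, obtaining $\rank_R(U_n)\cdot e(S)$; and asymptotically from below via the fact that $\{U_n\otimes_R S\}$ remains weakly lim Cohen--Macaulay over $S$, obtaining $\nu(U_n)(1-o(1))$. The Ulrich relation $\nu(U_n)\sim \rank_R(U_n)\cdot e(R)$ will then convert this comparison into $e(R)\leq e(S)$.

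First I would reduce to the case where $R$ (hence $S$) has an infinite residue field by a standard gonflement; by \autoref{lem:limCMflatextension} together with the obvious behavior of $\nu$ and $e(\m,-)$, the weakly lim Ulrich sequence survives. The key ingredient is the exact identity $e(\n, U_n\otimes_R S) = \rank_R(U_n)\cdot e(S)$. To derive it, note that since $R$ is a domain, $R\to S$ is flat, and $\dim R = \dim S = d$, going-down forces $Q\cap R = (0)$ for every prime $Q\subset S$ with $\dim S/Q = d$; these are automatically minimal primes of $S$. The map $R\to S_Q$ therefore factors through $K=\Frac(R)$, so
\[(U_n\otimes_R S)_Q \;\cong\; (U_n\otimes_R K)\otimes_K S_Q \;\cong\; S_Q^{\rank_R(U_n)}.\]
Plugging this into the associativity formula for multiplicity and comparing with the same formula for $S$ itself yields the claimed identity.

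On the other hand, \autoref{lem:limCMflatextension} says that $\{U_n\otimes_R S\}$ is weakly lim Cohen--Macaulay over $S$, and one has $\nu_S(U_n\otimes_R S) = \nu_R(U_n)$. Hence \autoref{cor:limCMmultiplicitylargerthengenerators} applied over $S$ gives
\[\liminf_{n\to\infty}\frac{e(\n,U_n\otimes_R S)}{\nu(U_n)}\;\geq\;1.\]
Meanwhile, the weakly lim Ulrich hypothesis says $\lim_n e(\m,U_n)/\nu(U_n)=1$, and the equality $e(\m,U_n) = \rank_R(U_n)\cdot e(R)$---which holds because $R$ is a domain and $\dim U_n = d$---rewrites this as $\lim_n \rank_R(U_n)/\nu(U_n) = 1/e(R)$. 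Inserting the exact identity into the $\liminf$ bound produces $\liminf_n \rank_R(U_n)\cdot e(S)/\nu(U_n)\geq 1$; comparing this with the preceding limit forces $e(S)/e(R)\geq 1$.

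The step I expect to require the most care is the exact evaluation $e(\n,U_n\otimes_R S) = \rank_R(U_n)\cdot e(S)$: this is where the domain hypothesis on $R$ is really used, since without it the localizations $(U_n\otimes_R S)_Q$ would not be free of rank $\rank_R(U_n)$ over $S_Q$ and the associativity formula would not collapse so cleanly. Everything else is a careful combination of the preparatory results \autoref{lem:limCMflatextension} and \autoref{cor:limCMmultiplicitylargerthengenerators} with the Ulrich condition and the standard identity $e(\m,M) = \rank_R(M)\cdot e(R)$ valid over a domain.
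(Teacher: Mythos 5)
Your proposal is correct and follows essentially the same route as the paper: the same chain $e(R)\cdot\rank_R(U_n)=e(\m,U_n)\sim\nu_R(U_n)=\nu_S(U_n\otimes_RS)\leq e(\n,U_n\otimes_RS)(1+o(1))=\rank_R(U_n)\cdot e(S)(1+o(1))$, using \autoref{lem:limCMflatextension} and \autoref{cor:limCMmultiplicitylargerthengenerators} in exactly the same places. The only difference is that you spell out the associativity-formula computation of $e(\n,U_n\otimes_RS)$ (which the paper leaves implicit in its final equality), and that is done correctly.
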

\begin{proof}
We first reduce to the case that $S/\m S$ has finite length following \cite[Lemma 2.2]{MaLechconjectureindimensionthree}. We replace $S$ by $\widehat{S}$ to assume $S$ is complete, and then we choose a minimal prime $Q$ of $\m S$ such that $\dim S/Q=\dim S/\m S$. Now $R\to S_Q$ is a flat local extension such that $S_Q/\m S_Q$ has finite length over $S_Q$. We have
$$\dim S \geq \dim S/Q + \dim S_Q = \dim S/\m S + \dim R = \dim S,$$
where the equalities follow from our choice of $Q$ and the dimension formula for flat local extensions \cite[Theorem 15.1]{Matsumura86}. It follows that $\dim S/Q + \dim S_Q=\dim S$, and as $S$ is complete (in particular excellent) we can invoke the localization formula for multiplicities (see \cite[Theorem 2.1]{MaLechconjectureindimensionthree}, which originates from \cite{Nagata62}) to see that $e(S_Q)\leq e(S)$. Therefore we can replace $S$ by $S_Q$ to assume that $S/\m S$ has finite length.

%We can further replace $S$ by $S[t]_{\n S[t]}$ to assume $S$ has an infinite residue field.
Since $R$ is a domain and $\{U_n\}$ is a weakly lim Ulrich sequence, we have:
\begin{eqnarray*}
% \nonumber % Remove numbering (before each equation)
  e(R) &=& \lim_{n\to\infty}\frac{e(\m, U_n)}{\rank_RU_n}=\lim_{n\to\infty}\frac{\nu_R(U_n)}{\rank_RU_n} \\
   &=& \lim_{n\to\infty}\frac{\nu_S(U_n\otimes_R S)}{\rank_RU_n}\leq \lim_{n\to\infty}\frac{e(\n, U_n\otimes_R S)}{\rank_RU_n}=e(S).
\end{eqnarray*}
Here the inequality above follows from \autoref{cor:limCMmultiplicitylargerthengenerators}, as $\{U_n\otimes_R S\}$ is a weakly lim Cohen--Macaulay sequence over $S$ by \autoref{lem:limCMflatextension}. To see the last equality, note that we have a map $R^{\oplus\rank_RU_n}\to U_n$ whose kernel and cokernel have dimensions strictly less than $\dim R$, so after tensoring with $S$, we obtain a map $S^{\oplus\rank_RU_n}\to U_n\otimes_RS$ whose kernel and cokernel have dimensions strictly less than $\dim S$. Hence we have $e(\n, U_n\otimes_R S)=e(\n, S^{\oplus\rank_RU_n})=e(S)\cdot \rank_RU_n.$
\end{proof}
%note that we have an exact sequence $0\to K_n\to R^{\oplus\rank_RU_n}\to U_n\to C_n\to 0$ where $\dim K_n, \dim C_n<\dim R$, and after tensoring with $S$, the kernel and cokernel still  have dimension $<\dim $

We end this section with a proposition which follows from more general results in \cite{BhattHochsterMaLimCohenMacaulaysequence}. As this work is still in the stage of preparation, we give the proof of the proposition for the sake of completeness. This proposition also appeared as \cite[Lemma 5.20]{IyengarMaWalkerMultiplicity} where it was pointed out that the hypothesis $R$ is a domain is not necessary if we use $\nu_R(M_n)$ instead of $\rank_R(M_n)$ (the proof is essentially the same).

\begin{proposition}
\label{prop:localcohomologycriterion}
Let $(R,\m)$ be a local domain of dimension $d$ and let $\{M_n\}$ be a sequence of finitely generated modules of dimension $d$. Suppose $H_\m^j(M_n)$ has finite length for all $n$ and all $j<d$. Then $\{M_n\}$ is a lim Cohen--Macaulay sequence if $$\length_R(H_\m^j(M_n))=o(\rank_RM_n)$$
for all $j<d$.
\end{proposition}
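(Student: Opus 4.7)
The plan is to reduce the proposition to a universal bound of the shape $\length H_i(\underline{x}, M) \leq C_d \sum_{j<d} \length H^j_\m(M)$, valid for every system of parameters $\underline{x}$ of $R$, every $i \geq 1$, and every finitely generated $R$-module $M$ whose $H^j_\m(M)$ have finite length for $j<d$, with $C_d$ a constant depending only on $d$. Granted such a bound, I apply it with $M = M_n$ for a fixed system of parameters $\underline{x}$; summing the hypothesis $\length H^j_\m(M_n) = o(\rank_R M_n)$ over $j<d$ yields $\length H_i(\underline{x}, M_n) = o(\rank_R M_n)$ for all $i \geq 1$. Because $R$ is a domain with $\dim M_n = d = \dim R$, any $R$-generating set of $M_n$ generates $M_n \otimes_R \Frac(R)$ over the fraction field, so $\rank_R M_n \leq \nu(M_n)$ and $M_n$ has positive rank; hence $\length H_i(\underline{x}, M_n) = o(\nu(M_n))$, which is exactly the lim Cohen--Macaulay condition.

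To establish the key bound I would work with the double complex $K_\bullet(\underline{x}) \otimes_R \check{C}^\bullet(\underline{x}; M)$, where $K_\bullet(\underline{x})$ is the Koszul complex and $\check{C}^\bullet(\underline{x}; M)$ the Čech complex (placed in non-positive homological degree so the total object is a single homological complex). Filtering first by Čech degree: for $q \geq 1$ each summand of $\check{C}^q$ is a localization $R_{x_I}$ in which some $x_i$ is invertible, making the Koszul complex on $\underline{x}$ null-homotopic there. The $E_1$ page collapses to the column $q = 0$ equal to $H_\bullet(\underline{x}; M)$, so the total homology equals Koszul homology. Filtering the other way gives the spectral sequence
\[
E_2^{p,q} = H_p(\underline{x}; H^q_\m(M)) \Longrightarrow H_{p-q}(\underline{x}; M).
\]
For the abutment in total degree $i \geq 1$, only positions with $p = i+q$ and $0 \leq q \leq d-i$ can contribute; in particular $q < d$, so every local cohomology module that appears has finite length by hypothesis. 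Each such $E_\infty^{p,q}$ is a subquotient of $E_2^{p,q}$, which is itself a subquotient of $\Lambda^p R^d \otimes_R H^q_\m(M)$, yielding $\length E_\infty^{p,q} \leq \binom{d}{p} \length H^q_\m(M)$. Summing over $q$ produces
\[
\length H_i(\underline{x}, M) \leq \sum_{q=0}^{d-i} \binom{d}{i+q} \length H^q_\m(M),
\]
which gives the desired universal bound with $C_d$ the maximum binomial coefficient.

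The main obstacle is the careful setup of the double complex: verifying the acyclicity of the Koszul complex on each localization $R_{x_I}$ for $|I|\geq 1$, adopting a consistent homological indexing between the Koszul and (originally cohomological) Čech complexes so that the bidegrees in the two spectral sequences align, and checking that both spectral sequences converge strongly to the same total homology. Everything else—the binomial subquotient bound, the range $0 \le q \le d-i$, and the trivial inequality $\rank_R M_n \leq \nu(M_n)$ for a finitely generated module of top dimension over a domain—is routine bookkeeping.
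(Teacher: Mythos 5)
Your proposal is correct and takes essentially the same route as the paper: the paper also passes to $K_\bullet(\underline{x})\otimes_R M_n \simeq K_\bullet(\underline{x})\otimes_R \mathbf{R}\Gamma_\m(M_n)$ (your Koszul--\v{C}ech double complex is just the explicit model for this quasi-isomorphism) and uses the resulting spectral sequence $H_{j+i}(\underline{x}, H_\m^j(M_n))\Rightarrow H_i(\underline{x}, M_n)$, noting that $j=d$ cannot contribute when $i\geq 1$. The only cosmetic difference is that the paper bounds each contribution by $2^d\cdot\length(H_\m^j(M_n))$ rather than by your sharper binomial coefficients, and concludes via \autoref{lem:limCMequivalentforms} instead of your direct observation that $\rank_R M_n\leq\nu(M_n)$.
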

\begin{proof}
Let $\underline{x}=x_1,\dots,x_d$ be a system of parameters of $R$. We have
$$H_i(\underline{x}, M_n)=H^{-i}(K_\bullet(\underline{x}, R)\otimes_R M_n)=H^{-i}(K_\bullet(\underline{x}, R)\otimes_R \mathbf{R}\Gamma_\m(M_n)),$$
where $H^{-i}(-)$ denotes $(-i)$-th cohomology of the complex. It follows that we have a spectral sequence:
$$H_{j+i}(\underline{x}, H_\m^j(M_n))\Rightarrow H_i(\underline{x}, M_n).$$
If $j=d$, then $j+i>d$ when $i\geq 1$. So for all $i\geq 1$ we have
\[\length_R(H_i(\underline{x}, M_n))\leq \sum_{j=0}^{d-1}\length_R\left(H_{j+i}(\underline{x}, H_\m^j(M_n))\right)\leq \sum_{j=0}^{d-1}2^d\cdot \length_R(H_\m^j(M_n))=o(\rank_RM_n),\]
where the second inequality holds because the $(i+j)$-th term of the Koszul complex has rank $\binom{d}{i+j}\leq 2^d$. This completes the proof that $\{M_n\}$ is lim Cohen--Macaulay by \autoref{lem:limCMrank}.
\end{proof}

\section{Main result for graded rings}

In this section we prove our main results. We first recall that if $R_1,\dots, R_c$ are $\mathbb{N}$-graded rings over a field $k$ and $M_i$ is a $\mathbb{Z}$-graded module over $R_i$ for each $1\leq i\leq c$, then the {\it Segre product} of $M_1, M_2, \dots, M_c$ is defined as $$M_1\# M_2 \# \cdots \# M_c := \oplus_{j\in\mathbb{Z}} [M_1]_j\otimes [M_2]_j \otimes\cdots \otimes [M_c]_j.$$

\begin{setting}
\label{setting}
We fix an infinite field $k$ of characteristic $p>0$ and let $q=p^e$ (eventually we will let $e\to\infty$ so one should think of $q$ as being very large). We consider
$$W_q^c:=k[x_1, y_1]\# k[x_2, y_2](q)\# \cdots \# k[x_c, y_c]((c-1)q),$$
which is a rank one module over the ring $$T_c=k[x_1, y_1]\# k[x_2, y_2]\# \cdots \# k[x_c, y_c].$$ We note that $T_c$ is a standard graded ring of dimension $c+1$: the degree $j$ part is spanned by monomials whose total degree in $x_i$ and $y_i$ is $j$ for each $1\leq i\leq c$. Hence $T_c$ is module-finite over $A_c=k[z_1,z_2,\dots,z_{c+1}]$ where $z_1,\dots,z_{c+1}$ are degree one elements in $T_c$ that form a homogeneous system of parameters of $T_c$. We will view $W_q^c$ as a graded module over $A_c$ that lives in non-negative degrees (because $k[x_1,y_1]$ only lives in non-negative degrees). We abuse notations and let $\m$ denote the homogeneous maximal ideal of $A_c$. Since $W_q^c$ is torsion-free and reflexive, we have $H_\m^0(W_q^c)=H_\m^1(W_q^c)=0$.
\end{setting}

The next lemma on the degrees and dimensions of local cohomology modules of $W_q^c$ is elementary. In fact, since $W_q^c$ is explicitly described, precise dimensions of each degree of its local cohomology modules can be computed. We are not interested in the precise formulas so we state and prove what we need. Geometrically, this corresponds to the sheaf cohomology of $O_{\mathbb{P}^1}(t)\boxtimes O_{\mathbb{P}^1}(t+q) \boxtimes \cdots \boxtimes O_{\mathbb{P}^1}(t+(c-1)q)$ on a product of projective lines when $t, q$ vary. It is worth pointing out that our construction is closely related to the notion of {\it supernatural vector bundles}, see \cite[Section 6]{EisenbudSchreyerJAMS} for more general constructions.

\begin{lemma}
\label{lem:localcohomologyofW_q}
With notation as in \autoref{setting}, we have
\begin{enumerate}[(1)]
  \item $H_\m^j(W_q^c)$ is nonzero only in degrees $-(j-2)q-2,\dots,-(j-2)q-q$ for each $2\leq j\leq c$.
  \item $H_\m^{c+1}(W_q^c)$ is nonzero only in degrees $\leq -(c-1)q-2$.
  \item Fix an integer $r\geq 1$. For each $2\leq j\leq c+1$, as $q\to \infty$ one has
  $$\dim_k H_\m^{j}(W_q^c)_{-(j-2)q-r} =o(q^c).$$
   Moreover for each $t\geq 0$ one has
   $$\dim_k H_\m^{c+1}(W_q^c)_{-(c+t)q-r}=o(q^{c+1}).$$
\end{enumerate}
\end{lemma}
\begin{proof}
We use induction on $c$, the case $c=1$ is obvious. Now suppose the lemma is proven for $c-1$. Since $W_q^c=W_q^{c-1}\#k[x_c,y_c]((c-1)q)$, it follows from the Kunneth formula for local cohomology (see \cite{GotoWatanabeOngradedringsI}) that
\begin{equation}
\label{eqn:localcohmologyofW_q}
\begin{cases}
 \hspace{0.5em}   H_\m^j(W_q^c)=H_\m^j(W_q^{c-1})\# \left(k[x_c,y_c]((c-1)q)\right),  \mbox{ for all } j\leq c \\
 \hspace{0.5em}   H_\m^{c+1}(W_q^c)=H_\m^n(W_q^{c-1})\# H_\m^2(k[x_c,y_c]((c-1)q)).  %\mbox{if }.
\end{cases}
\end{equation}
Note that we are ignoring terms that are $0$ coming from the inductive hypothesis when applying the Kunneth formula. From \autoref{eqn:localcohmologyofW_q}, parts $(1)$ and $(2)$ are clear by the inductive hypothesis. %For example, when $j=n$, $H_\m^n(W_q^{n-1})$ lives in degree $\leq -(n-2)q-2$ while $k[x_n,y_n]((n-1)q)$ lives in degree $\geq -(n-1)q$, which shows that $H_\m^n(W_q^c)$ lives in degree $-(n-2)q-2,\cdots, -(n-1)q$.

To establish part $(3)$, we note that by \autoref{eqn:localcohmologyofW_q} and the induction hypothesis, for $j\leq c$,
\begin{eqnarray*}
% \nonumber % Remove numbering (before each equation)
  \dim_k H_\m^j(W_q^c)_{-(j-2)q-r} &=& \dim_kH_\m^j(W_q^{c-1})_{-(j-2)q-r}\cdot \dim_k (k[x_c,y_c])_{(c+1-j)q-r} \\
   &=& o(q^{c-1})\cdot((c+1-j)q-r+1)=o(q^c).
\end{eqnarray*}
For the top local cohomology, again by \autoref{eqn:localcohmologyofW_q} and the induction hypothesis,
\begin{eqnarray*}
  \dim_k H_\m^{c+1}(W_q^c)_{-(c+t)q-r} &=& \dim_kH_\m^c(W_q^{c-1})_{-(c+t)q-r}\cdot \dim_kH_\m^2(k[x_c,y_c])_{-(t+1)q-r}  \\
   &=& o(q^{c})\cdot((t+1)q+r-1).
\end{eqnarray*}
This gives $o(q^{c})$ for $t=-1$ and $o(q^{c+1})$ for $t\geq0$.
\end{proof}

The following immediate consequence is what we will need in the sequel. We adopt the following notation: if $M$ is a $\mathbb{Z}$-graded module, then $M_{a \mod q}:=\oplus_{i\in\mathbb{Z}} M_{a+iq}$

\begin{corollary}
\label{cor:lowerlocalcohomologyofW_q}
With notation as in \autoref{setting}, for any fixed positive integer $r$ and any $0\leq j\leq c$, we have
$$\dim_k H_\m^j(W_q^c)_{-r \mod q}=o(q^c) \text{ as } q\to\infty.$$
\end{corollary}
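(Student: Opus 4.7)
The plan is to reduce the corollary directly to the three parts of \autoref{lem:localcohomologyofW_q}. The cases $j=0$ and $j=1$ are immediate from \autoref{setting}, where we recorded that $H_\m^0(W_q^n)=H_\m^1(W_q^n)=0$ since $W_q^n$ is torsion-free and reflexive over $A_n$.

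For $2\leq j\leq n$, the key observation is that part (a) of the lemma confines $H_\m^j(W_q^n)$ to the range of consecutive integers $-(j-1)q,\, -(j-1)q+1,\, \dots,\, -(j-2)q-2$, an interval of length $q-1 < q$. Consequently any fixed residue class modulo $q$ meets this range in at most one integer, so $H_\m^j(W_q^n)_{-r\mod q}$ is supported in at most a single graded piece. For $q$ large relative to $r$, a short arithmetic check identifies this piece as exactly the degree $-(j-2)q-r$ controlled by part (c) of the lemma, whose dimension grows like $o(q^n)$. Hence the claimed bound holds.

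The only bookkeeping worth flagging is the handling of small values of $r$ (or equivalently small $q$) for which the residue class $-r \mod q$ does not intersect the support of $H_\m^j(W_q^n)$ at all; but in this scenario the group is identically zero in that residue class and the bound is trivial. I do not expect any substantive obstacle beyond this elementary arithmetic comparison between the support described in part (a) and the distinguished degree appearing in part (c); the real content of the corollary is already packaged in \autoref{lem:localcohomologyofW_q}.
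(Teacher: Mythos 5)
Your argument is correct and is exactly the paper's intended proof: the paper simply states that the corollary ``follows directly from parts (a) and (c) of \autoref{lem:localcohomologyofW_q},'' and your write-up supplies the elementary bookkeeping behind that assertion (the vanishing for $j=0,1$, the fact that the degree support from part (a) is an interval of $q-1$ consecutive integers and so meets each residue class mod $q$ at most once, and the identification of that degree with the one controlled by part (c)). No gaps.
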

\begin{proof}
This follows directly from parts $(1)$ and $(3)$ of \autoref{lem:localcohomologyofW_q}.
\end{proof}

Now we state and prove our main result on weakly lim Ulrich sequences. Recall that a field $k$ of positive characteristic $p>0$ is called {\it $F$-finite} if $[k^{1/p}:k]<\infty$.

\begin{theorem}
\label{thm:MainLimUlrich}
Let $(R,\m)$ be a standard graded domain over an infinite $F$-finite field $k$ of characteristic $p>0$ localized at the homogeneous maximal ideal. Then $R$ admits a weakly lim Ulrich sequence.
\end{theorem}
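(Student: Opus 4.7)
Plan. Let $d=\dim R$. I would construct the weakly lim Ulrich sequence by combining the Segre-product module $W_q^{d-1}$ of \autoref{setting} with Frobenius pushforward, relative to a Noether normalization of $R$. First I fix a Noether normalization $A=k[z_1,\ldots,z_d]\hookrightarrow R$ by general degree-one forms of $R$, so that $R$ is a finite graded $A$-module and $A$ is identified with $A_{d-1}$. The module $W_q^{d-1}$ then becomes a graded $A$-module whose local cohomology in each fixed residue class modulo $q$ is $o(q^{d-1})$ by \autoref{cor:lowerlocalcohomologyofW_q}.

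The candidate sequence is
$$U_q \;:=\; \bigl(F^e_*(R \otimes_A W_q^{d-1})\bigr)^{(0)}, \qquad q=p^e,$$
where $(-)^{(0)}$ denotes the degree-zero-mod-$q$ summand of the $e$-th Frobenius pushforward (over $R$), under the decomposition of a Frobenius pushforward into $q$ graded $R$-modules according to residue class of the original grading. (If $R$ is not flat over $A$, one first replaces $R\otimes_A W_q^{d-1}$ by its maximal $d$-dimensional quotient; higher Tor terms contribute only pieces of lower dimension.) Each $U_q$ is a finitely generated graded $R$-module of dimension $d$, with $\rank_R U_q \sim q^{d-1}(d-1)!$ and consequently $e(\m,U_q) \sim q^{d-1}(d-1)!\cdot e(R)$.

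For the weakly lim Cohen--Macaulay property I would invoke \autoref{prop:localcohomologycriterion}. Local cohomology commutes with Frobenius pushforward and with the residue-class decomposition, so $\length_R H^j_{\m}(U_q)$ is controlled by the length of $H^j_{\m}(W_q^{d-1})$ in a single residue class modulo $q$ (up to a bounded multiplicative factor coming from the base change $A\to R$, which is module-finite and has the same residue field). By \autoref{cor:lowerlocalcohomologyofW_q} this length is $o(q^{d-1})$ for each $j<d$, which is $o(\rank_R U_q)$, yielding the weakly lim CM property.

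The main obstacle is verifying the Ulrich ratio $e(\m,U_q)/\nu_R(U_q)\to 1$. The weakly lim CM property together with \autoref{lem:limCMequivalentforms} already gives $\length_R(U_q/\underline{z}U_q)\sim e(\m,U_q)$ for $\underline{z}=z_1,\ldots,z_d$, so it suffices to show $\nu_R(U_q)\sim \length_R(U_q/\underline{z}U_q)$, i.e., that $U_q/\underline{z}U_q$ is asymptotically concentrated in degree $0$. This is a Hilbert-function calculation on the Segre-product side: the specific shifts $0,q,2q,\ldots,(d-2)q$ in \autoref{setting} are calibrated precisely so that after Frobenius pushforward and restriction to the zero residue class, $U_q$ is minimally generated in a narrow range of degrees whose total $k$-dimension asymptotically matches $\length_R(U_q/\underline{z}U_q)$. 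I expect this last balancing to be the most delicate part of the argument, as it requires a precise accounting of how the Frobenius decomposition interacts simultaneously with the multi-graded Segre-product structure and with the non-polynomial $R$-module structure.
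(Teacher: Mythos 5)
Your overall architecture matches the paper's: tensor $W_q^{d-1}$ with $R$ over a Noether normalization, take the Frobenius pushforward, and restrict to a single residue class of the grading modulo $q$. But there are three concrete gaps. First, you take the residue class $0 \bmod q$, whereas the construction requires a \emph{fixed negative} residue class such as $-1 \bmod q$; the paper explicitly warns that non-negative classes do not work. The negativity is used repeatedly "for degree reasons": since $R\otimes_A W_q^{d-1}$ lives in non-negative degrees, the lowest surviving degree in the class $-1 \bmod q$ is $q-1$, so the entire piece $(R\otimes_AW_q^{d-1})_{q-1}$ consists of minimal generators (as $\m^{[q]}$ raises degree by at least $q$), and likewise $(W_q^{d-1})_{q-r}\cap(\underline{z}^q)(W_q^{d-1})_{-r\bmod q}=0$. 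With residue class $0$ the bottom piece is the one-dimensional degree-$0$ part and these exact vanishings fail; you supply no substitute accounting.

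Second, your plan to verify the weakly lim Cohen--Macaulay property by invoking \autoref{prop:localcohomologycriterion} breaks down outside the Cohen--Macaulay case, which is precisely the hard case. That proposition requires $H^j_\m(U_q)$ to have finite length for $j<d$. Away from $\m$ the module $W_q^{d-1}$ is free over $A$, so $R\otimes_AW_q^{d-1}$ is locally a free $R$-module on the punctured spectrum; hence its lower local cohomology has finite length only when $R$ itself is Cohen--Macaulay on the punctured spectrum. Your assertion that $\length H^j_\m(U_q)$ is "controlled by $H^j_\m(W_q^{d-1})$ up to a bounded multiplicative factor coming from the base change $A\to R$" is only valid when $R$ is a free (equivalently, Cohen--Macaulay) $A$-module; in general $R\otimes_AW_q^{d-1}$ has contributions from the cokernel $C$ of $\oplus_iA(-b_i)\hookrightarrow R$ and from $\Tor$ modules. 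This is exactly why the paper abandons the local cohomology criterion in the general case and instead verifies the length inequality of \autoref{lem:limCMequivalentforms} directly, using a prime filtration of $C$ and the estimate \autoref{eqn:dimensionneq(q-r)}. Your parenthetical fix (pass to the maximal $d$-dimensional quotient, since "higher Tor terms contribute only pieces of lower dimension") does not help: lower-dimensional pieces are invisible to rank and multiplicity but contribute fully to $\nu(U_q)$ and to $\length(U_q/\underline{z}U_q)$, which are the quantities that must be controlled. Third, and most importantly, the Ulrich ratio $e(\m,U_q)/\nu_R(U_q)\to1$ --- the actual content of the theorem --- is deferred in your write-up to an unexecuted "Hilbert-function calculation." The needed inequality is $\nu_R(U_q)\ge\dim_kF^e_*\bigl((R\otimes_AW_q^{d-1})_{q-1}\bigr)$ combined with $\dim_k(R\otimes_AW_q^{d-1})_{q-1}\ge(d-1)!\,s\,q^{d-1}+o(q^{d-1})$, and the latter requires the $\Tor$ estimate of \autoref{clm:higherTorestimate} (built on \autoref{cor:lowerlocalcohomologyofW_q} and a minimal free resolution with non-negative twists); none of this appears in the proposal. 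As it stands, the proposal correctly identifies the candidate sequence and the role of \autoref{cor:lowerlocalcohomologyofW_q}, but the two substantive verifications are either missing or rest on a criterion that does not apply.
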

\begin{proof}
Let $d=\dim R$. If $d=0$ then $R$ is a field, so $R$ is an Ulrich module. When $d=1$ it is easy to see that $\m^N$ is an Ulrich module for $N\gg0$. In the rest of the proof we will assume $d\geq 2$.% to avoid some tautology in the construction (if $d=1$, then it is easy to see that $\m^N$ is an Ulrich module for $N\gg0$ so $R$ admits a weakly lim Ulrich sequence).

Since $R$ is standard graded and $k$ is infinite, there exists homogeneous degree one elements $z_1,\dots,z_d$ of $R$ that form a minimal reduction of $\m$. We identify the subring $A:=k[z_1,\dots,z_d]$ with the ring $A_{d-1}$ as in \autoref{setting}. Thus we have a sequence of finitely generated modules $\{W_q^{d-1}\}$ over $A$ where $q=p^e$. We will show that the following sequence:
$$U_e:=F_*^e\left((R\otimes_AW_q^{d-1})_{-1 \mod q}\right)$$
is a weakly lim Ulrich sequence over $R$.

Note that the $R$-module structure on $U_e$ is well-defined: under the $e$-th Frobenius pushforward $F^e_*(-)$, $x\in R$ acts as $x^q$ so elements in $R\otimes_AW_q^{d-1}$ of degree $\equiv -1 \mod q$ are preserved under the $R$-action. Also note that we take the degree $\equiv -1 \mod q$ in the definition of $U_e$ just for simplicity: in fact the proof will show that any {\it fixed negative integer} $-r$ will work.

We briefly outline the proof strategy when $R$ is Cohen--Macaulay, in which case we will show that $\{U_e\}$ is lim Ulrich (but see also \autoref{rmk:limUlrich}). In this case, $U_e$ is a direct sum of Frobenius pushforward of shifted copies of $(W_q^{d-1})_{-1 \mod q}$ and so $\length_R(H_\m^j(U_e))<\infty$ for all $j<d$. Thus we can apply \autoref{prop:localcohomologycriterion}, which allows us to compute local cohomology over $A$ and invoke \autoref{cor:lowerlocalcohomologyofW_q}. By the choice of $A$, the multiplicity computed over $A$ and over $R$ are the same, and it is easy to compare the rank over $A$ and over $R$. Putting these together will show $\{U_e\}$ is lim Cohen--Macaulay. To show $\{U_e\}$ is lim Ulrich, we estimate $\nu_R(U_e)$ by computing the dimension of certain graded piece of $W_q^{d-1}$ that cannot be in $\m U_e$.

%We first consider the case that $R$ is Cohen--Macaulay. In this case, we will see that $\length_R(H_\m^j(U_e))<\infty$ for all $j<d$, thus we can apply \autoref{prop:localcohomologycriterion}, which allows us to compute local cohomology over $A$ and apply \autoref{cor:lowerlocalcohomologyofW_q}. By the way we choose $A$, the multiplicity computed over $A$ and over $R$ are also the same (and it is easy to compare the rank over $A$ and over $R$). %the proof in this case is less technical while revealing the idea behind the construction.  %However, the individual $U_e$ is {\it not} Cohen--Macaulay. In this case, we will show

\subsection*{The case $R$ is Cohen--Macaulay} Since $R$ is Cohen--Macaulay and is a graded module-finite extension of the polynomial ring $A$, we know $R\cong \oplus_{i=1}^s A(-a_i)$ as a graded $A$-module where $s=\rank_AR$ and $a_i\geq 0$ for each $i$. Thus we have
$$U_e\cong\oplus_{i=1}^s F^e_*(W_q^{d-1}(-a_i)_{-1 \mod q})\cong\oplus_{i=1}^s F^e_*((W_q^{d-1})_{-1-a_i \mod q}) $$
as graded $A$-modules. Recall that $W_q^{d-1}$ is a rank one module over $T_{d-1}$, and
$$\dim_k (T_{d-1})_t=(t+1)^{d-1}=t^{d-1}+o(t^{d-1}),$$
thus the multiplicity of $T_{d-1}$ as an $A$-module is $(d-1)!$. Therefore
$$e(\underline{z}, W_q^{d-1})=e(\m_A,W_q^{d-1})=(d-1)!.$$
It follows that $\rank_AW_q^{d-1}=(d-1)!$. We claim the following:
\begin{claim}
\label{clm:veroneserank}
For every fixed negative integer $-r$, the rank of $(W_q^{d-1})_{-r \mod q}$ as a module over $A^{(q)}$, the $q$-th Veronese subring of $A$, is equal to $(d-1)!$.
\end{claim}
\begin{proof}[Proof of Claim]
Since we have $\rank_AW_q^{d-1}=(d-1)!$, it is enough to show that
\begin{equation}
\label{eqn:veroneserank}
(W_q^{d-1})_{-r \mod q}\otimes_{A^{(q)}}\text{Frac}(A)=W_q^{d-1}\otimes_A\text{Frac}(A).
\end{equation}
To see this, first note that one containment $\subseteq$ is obvious. Next, every homogeneous element of the right hand side of \autoref{eqn:veroneserank} can be written as $\frac{w}{x}$, where $w\in W_q^{d-1}$ and $x\in A$ are homogeneous elements. Since $A$ is generated over $k$ by degree one forms, we can pick $y\in A$ such that $\deg w+\deg y\equiv -r \mod q$. It follows that $wy\in (W_q^{d-1})_{-r \mod q}$, and hence $\frac{w}{x}=\frac{wy}{xy} \in (W_q^{d-1})_{-r \mod q}\otimes_{A^{(q)}}\text{Frac}(A)$. This proves the other containment $\supseteq$ of \autoref{eqn:veroneserank}.
\end{proof}
By \autoref{clm:veroneserank}, we know the rank of $F^e_*((W_q^{d-1})_{-r \mod q})$ over $A^{(q)}$ is equal to $(d-1)!q^{d+\alpha}$ where $\alpha=\log_p[k:k^p]$. Therefore, since $\rank_{A^{(q)}}A=q$, for every fixed negative integer $-r$, we have
\begin{equation}
\label{eqn:rankofF^eW_q}
\rank_AF^e_*((W_q^{d-1})_{-r \mod q})=(d-1)!q^{d+\alpha-1}.
\end{equation}

To show $\{U_e\}$ is lim Cohen--Macaulay, by \autoref{prop:localcohomologycriterion} it is enough to prove that for every fixed negative integer $-r$ and each $j\leq d-1$,
\begin{equation}
\label{eqn:lengthoflocalcohomology}
\length_A\left(H_\m^j(F^e_*((W_q^{d-1})_{-r \mod q}))\right) =o\left(\rank_AF^e_*((W_q^{d-1})_{-r \mod q})\right)=o(q^{d+\alpha-1}).
\end{equation}
But since $H_\m^j(F^e_*((W_q^{d-1})_{-r \mod q}))=F^e_*(H_\m^j(W_q^{d-1})_{-r \mod q})$ and under the Frobenius pushforward $F^e_*$, the lengths get multiplied by $p^\alpha$, \autoref{eqn:lengthoflocalcohomology} follows from \autoref{cor:lowerlocalcohomologyofW_q}.

Finally, to show $\{U_e\}$ is lim Ulrich, we note that
\begin{eqnarray*}
% \nonumber % Remove numbering (before each equation)
  e(\m, U_e)=e(\underline{z}, U_e) &=&\sum_{i=1}^{s}e(\m_A, F^e_*((W_q^{d-1})_{-1-a_i \mod q})) \\
   &=& \sum_{i=1}^{s}\rank_AF^e_*((W_q^{d-1})_{-1-a_i \mod q})=(d-1)!sq^{d+\alpha-1}
\end{eqnarray*}
by \autoref{eqn:rankofF^eW_q}. On the other hand, since $R\otimes_AW_q^{d-1}$ lives in non-negative degrees, $\m^{[q]}\cdot (R\otimes_AW_q^{d-1})$ lives in degree $\geq q$. Therefore by the definition of $U_e$, we know that
$$\nu_R(U_e)\geq \dim_kF^e_*\left((R\otimes_AW_q^{d-1})_{q-1}\right).$$
However, by the definition of $W_q^{d-1}$ as in \autoref{setting}, for every fixed negative integer $-r$, we know that
\begin{equation}
\label{eqn:dimensiondeg(q-r)}
\dim_k(W_q^{d-1})_{q-r}=(q-r+1)(2q-r+1)\cdots ((d-1)q-r+1)=(d-1)!q^{d-1} + o(q^{d-1}).
\end{equation}
Therefore, since $a_i\geq 0$, we have
$$\dim_kF^e_*\left((R\otimes_AW_q^{d-1})_{q-1}\right) = \sum_{i=1}^{s} \dim_kF^e_*\left((W_q^{d-1})_{q-1-a_i}\right)=(d-1)! sq^{d+\alpha-1} + o(q^{d+\alpha-1}). $$
Putting the above together, we have
$$\limsup_{e\to\infty}\frac{e(\m, U_e)}{\nu_R(U_e)}\leq \limsup_{e\to\infty}\frac{e(\m, U_e)}{\dim_k F^e_*\left((R\otimes_AW_q^{d-1})_{q-1}\right)}=1.$$
Therefore by \autoref{rmk:limsup} (b), $\{U_e\}$ is a lim Ulrich sequence.
%On the other hand, because we have shown that $\{U_e\}$ is lim Cohen--Macaulay, by \autoref{cor:limCMmultiplicitylargerthengenerators} we have
%$$\liminf_{e\to\infty}\frac{e(\m, U_e)}{\nu_R(U_e)}\geq 1.$$
%Thus the limit exists and is equal to $1$ and hence $\{U_e\}$ is a lim Ulrich sequence.

\subsection*{The general case} To handle the general case we first observe that our argument in the Cohen--Macaulay case proves that for every fixed negative integer $-r$, $F^e_*((W_q^{d-1})_{-r \mod q})$ is a lim Cohen--Macaulay sequence over $A$ (see \autoref{eqn:rankofF^eW_q}, \autoref{eqn:lengthoflocalcohomology}, and \autoref{prop:localcohomologycriterion}). In particular, we have (dropping $F^e_*$ results in dividing the vector space dimensions by $q^{\alpha}$)
\begin{equation}
\label{eqn:W_qweaklylimCM}
\dim_k\left(\frac{(W_q^{d-1})_{-r \mod q}}{(\underline{z}^q)(W_q^{d-1})_{-r \mod q}}\right)=(d-1)!q^{d-1}+o(q^{d-1}).
\end{equation}
On the other hand, we know that $\dim_k(W_q^{d-1})_{q-r}=(d-1)!q^{d-1}+o(q^{d-1})$ by \autoref{eqn:dimensiondeg(q-r)} and that $(W_q^{d-1})_{q-r} \cap (\underline{z}^q)(W_q^{d-1})_{-r\mod q}=0$ for degree reason (recall that $W_q^{d-1}$ only lives in non-negative degrees). This together with \autoref{eqn:W_qweaklylimCM} imply that
\begin{equation}
\label{eqn:dimensionneq(q-r)}
\dim_k\left(\left({W_q^{d-1}}/{(\underline{z}^q)W_q^{d-1}}\right)_{-r \mod q, \neq q-r}\right)=o(q^{d-1}).
\end{equation}

We now prove that $\{U_e\}$ is a weakly lim Cohen--Macaulay sequence. Let $s=\rank_AR$. We have a degree-preserving short exact sequence
\begin{equation}
\label{eqn:shortexactsequence}
0\to \oplus_{i=1}^sA(-b_i)\to R\to C\to 0
\end{equation}
where $C$ has dimension less than $d$ (note that $b_i\geq 0$ for all $i$). The rank of $U_e$ over $A$ is the same as the rank of
$$F^e_*\left(((\oplus_{i=1}^sA(-b_i))\otimes_AW_q^{d-1})_{-1\mod q}\right)\cong\oplus_{i=1}^s F^e_*((W_q^{d-1})_{-1-b_i \mod q})$$
over $A$. Therefore by \autoref{eqn:rankofF^eW_q}, we still have $$\rank_AU_e=e(\underline{z}, U_e)=(d-1)!sq^{d+\alpha-1}.$$ Thus to show $\{U_e\}$ is weakly lim Cohen--Macaulay, it is enough to show $\length_R(U_e/(\underline{z})U_e)\leq (d-1)!sq^{d+\alpha-1} +o(q^{d+\alpha-1})$ by \autoref{lem:limCMequivalentforms} (applied to $\underline{x}=\underline{z}$). Dropping $F^e_*$, this comes down to prove that
\begin{equation}
\label{eqn:wantedlenghsestimate}
\dim_k\left(\frac{(R\otimes_AW_q^{d-1})_{-1 \mod q}}{(\underline{z}^q)(R\otimes_AW_q^{d-1})_{-1 \mod q}}\right)\leq (d-1)!sq^{d-1}+o(q^{d-1}).
\end{equation}
From \autoref{eqn:shortexactsequence}, we obtain an exact sequence:
$$\frac{\oplus_{i=1}^s (W_q^{d-1})_{-1-b_i \mod q}}{(\underline{z}^q)(\oplus_{i=1}^s (W_q^{d-1})_{-1-b_i \mod q})}\to \frac{(R\otimes_AW_q^{d-1})_{-1 \mod q}}{(\underline{z}^q)(R\otimes_AW_q^{d-1})_{-1 \mod q}}\to \frac{(C\otimes_AW_q^{d-1})_{-1 \mod q}}{(\underline{z}^q)(C\otimes_AW_q^{d-1})_{-1 \mod q}}\to 0.$$
By \autoref{eqn:W_qweaklylimCM}, in order to establish \autoref{eqn:wantedlenghsestimate} it is enough to show that
$$\dim_k\left(\frac{(C\otimes_AW_q^{d-1})_{-1 \mod q}}{(\underline{z}^q)(C\otimes_AW_q^{d-1})_{-1 \mod q}}\right)=o(q^{d-1}).$$
Since $C$ is a finitely generated graded $A$-module of dimension less than $d$ and lives in non-negative degrees, it has a graded filtration by $(A/P_i)(-u_i)$, where $P_i$ are nonzero homogeneous prime ideals of $A$ and $u_i\geq 0$. So it is enough to prove that for any fixed homogeneous prime ideal $P\subseteq A$ and any $u\geq 0$, we have
$$\dim_k\left(\frac{(W_q^{d-1})_{-1-u \mod q}}{(PW_q^{d-1})_{-1-u \mod q}+(\underline{z}^q)(W_q^{d-1})_{-1-u \mod q}}\right)=o(q^{d-1}).$$

At this point, we invoke \autoref{eqn:dimensionneq(q-r)}. Thus in order to establish the above, it is enough to show that
$$\dim_k({W_q^{d-1}}/{PW_q^{d-1}})_{q-1-u}=o(q^{d-1}).$$
Fix $0\neq z\in P$ of degree $a>0$. Since ${W_q^{d-1}}/{zW_q^{d-1}}\twoheadrightarrow {W_q^{d-1}}/{PW_q^{d-1}}$ and $W_q^{d-1}$ is torsion-free, we know that
\begin{eqnarray*}
% \nonumber % Remove numbering (before each equation)
  \dim_k({W_q^{d-1}}/{PW_q^{d-1}})_{q-1-u} &\leq& \dim_k({W_q^{d-1}}/{zW_q^{d-1}})_{q-1-u} \\
   &=&\dim_k (W_q^{d-1})_{q-1-u}-\dim_k(W_q^{d-1})_{q-1-u-a}=o(q^{d-1})
\end{eqnarray*}
where the last equality follows from \autoref{eqn:dimensiondeg(q-r)}. This completes the proof of \autoref{eqn:wantedlenghsestimate} and hence we have established that $\{U_e\}$ is weakly lim Cohen--Macaulay.

Finally, we prove that $\{U_e\}$ is weakly lim Ulrich. Again since $R\otimes_AW_q^{d-1}$ only lives in non-negative degrees, $\m^{[q]}\cdot (R\otimes_AW_q^{d-1})$ lives in degree $\geq q$. Thus by the definition of $U_e$, we know that
$$\nu_R(U_e)\geq \dim_kF^e_*\left((R\otimes_AW_q^{d-1})_{q-1}\right).$$
Thus it remains to show that
\begin{equation}
\label{eqn:wanteddimensionestimate}
\dim_k(R\otimes_AW_q^{d-1})_{q-1}\geq (d-1)!sq^{d-1}+o(q^{d-1}),
\end{equation}
because this then implies that $\dim_kF^e_*\left((R\otimes_AW_q^{d-1})_{q-1}\right)\geq (d-1)!sq^{d+\alpha-1}+o(q^{d+\alpha-1})$ while $e(\m, U_e)=e(\underline{z}, U_e)=(d-1)!sq^{d+\alpha-1}$. To establish \autoref{eqn:wanteddimensionestimate}, we need the following claim. %is tricker as $R\otimes_AW_q^{d-1}$ is no longer a direct sum of graded shifts of $W_q^{d-1}$. We need the following claim.
\begin{claim}
\label{clm:higherTorestimate}
Let $M$ be a finitely generated graded $A$-module which is nonzero only in non-negative degrees. Then for any fixed negative integer $-r$ and any $i\geq 1$, we have
$$\dim_k(\Tor_i^A(M, W_q^{d-1})_{-r \mod q})=\dim_k(H^{-i}(M\otimes^\mathbb{L}_AW_q^{d-1})_{-r \mod q})=o(q^{d-1}).$$
\end{claim}
\begin{proof}[Proof of Claim]
Since all the lower local cohomology modules of $W_q^{d-1}$ have finite length, $W_q^{d-1}$ is Cohen--Macaulay on the punctured spectrum of $A$. Since $A$ is regular, this means $W_q^{d-1}$ is finite free on the punctured spectrum of $A$ and hence $\Tor_i^A(M, W_q^{d-1})$ has finite length for all $i\geq 1$. A simple spectral sequence argument shows that
$$H^{-i}(M\otimes^\mathbb{L}_AW_q^{d-1})=H^{-i}(\mathbf{R}\Gamma_\m(M\otimes^\mathbb{L}_A W_q^{d-1}))=H^{-i}(M\otimes^\mathbb{L}_A\mathbf{R}\Gamma_\m (W_q^{d-1})) \text{ for all $i\geq 1$.}$$
As a consequence, we have a degree-preserving spectral sequence:
$$\Tor_{j+i}^A(M, H_\m^j(W_q^{d-1}))\Rightarrow H^{-i}(M\otimes^\mathbb{L}_AW_q^{d-1}).$$
Next we consider a minimal graded finite free resolution of $M$ over $A$:
\begin{equation}
\label{eqn:minimalfreeresolution}
0\to \oplus_l A(-a_{nl}) \to \cdots \to \oplus_l A(-a_{1l})\to \oplus_l A(-a_{0l})\to  0
\end{equation}
where $n=\text{pd}_AM$ and all the $a_{ij}$ are non-negative integers (since $M$ lives in non-negative degrees). If $j\leq d-1$, then using the above free resolution to compute $\Tor_{j+i}^A(M, H_\m^j(W_q^{d-1}))$, we see that
$$\dim_k(\Tor_{j+i}^A(M, H_\m^j(W_q^{d-1}))_{-r \mod q})\leq \sum_l \dim_k H_\m^j(W_q^{d-1})_{-r-a_{i+j,l} \mod q}=o(q^{d-1})$$
by \autoref{cor:lowerlocalcohomologyofW_q}. But if $j=d$, then $j+i\geq d+1$ so $\Tor_{j+i}^A(M, H_\m^j(W_q^{d-1}))=0$ since $A$ is regular of dimension $d$. Therefore all the $E_2$-contributions of $H^{-i}(M\otimes^\mathbb{L}_AW_q^{d-1})_{-r \mod q}$ have $k$-vector space dimensions $o(q^{d-1})$. This completes the proof of the claim.
\end{proof}

Now we return to the proof of the theorem, the short exact sequence \autoref{eqn:shortexactsequence} induces:
$$\Tor_1^A(C, W_q^{d-1})_{q-1}\to \oplus_{i=1}^sW_q^{d-1}(-b_i)_{q-1}\to (R\otimes_AW_q^{d-1})_{q-1}\to (C\otimes_AW_q^{d-1})_{q-1}\to 0.$$
It follows that
\begin{eqnarray*}
% \nonumber % Remove numbering (before each equation)
  \dim_k (R\otimes_AW_q^{d-1})_{q-1} &\geq & \sum_{i=1}^{s}\dim_k (W_q^{d-1})_{q-1-b_i}-\dim_k\Tor_1^A(C, W_q^{d-1})_{q-1}  \\
   &=& (d-1)!sq^{d-1}+o(q^{d-1})
\end{eqnarray*}
where the last equality follows from \autoref{eqn:dimensiondeg(q-r)} and \autoref{clm:higherTorestimate}. This completes the proof of \autoref{eqn:wanteddimensionestimate} and hence $\{U_e\}$ is a weakly lim Ulrich sequence, as desired.
\end{proof}

\begin{remark}
\label{rmk:limUlrich}
The sequence $\{U_e\}$ constructed in \autoref{thm:MainLimUlrich} is in fact lim Ulrich even if $R$ is not Cohen--Macaulay. This was recently established in our joint work with Iyengar and Walker \cite{IyengarMaWalkerMultiplicity}, using sheaf cohomology computations and then passing to the affine cones. Furthermore, we can prove the classes $\{[U_e]/\rank_R(U_e)\}$ converge to the class $[R]_d$ in the Grothendieck group $G_0(R)$, see \cite[Theorem 7.1]{IyengarMaWalkerMultiplicity}. Although these stronger results are not needed for our application to Lech's conjecture, they are crucial in our study of Betti numbers and Dutta multiplicities, see \cite{IyengarMaWalkerMultiplicity} for more details and explanations. %For instance, it was proved in \cite{IyengarMaWalkerMultiplicity} that for a standard graded ring $(R,\m)$ of characteristic $p>0$, any perfect complex $F_\bullet$ of length $\dim(R)$ supported at $\{\m\}$ satisfies $\ch_\infty(F_\bullet)\geq e(R)$, which can be viewed as a generalization of Lech's conjecture (see \cite[Chapter V]{MaThesis} and \cite[Section 8]{IyengarMaWalkerMultiplicity} for more explanations).
%On the other hand, we believe that the weakly lim Ulrich condition may be more flexible to work with (and easier to construct in practice).
\end{remark}

%\begin{remark}
%We suspect the sequence $\{U_e\}$ constructed in \autoref{thm:MainLimUlrich} is lim Ulrich even if $R$ is not Cohen--Macaulay but our proof above is not enough to establish this. We plan to investigate this question in a future work. On the other hand,  %This will be true provided we can show that $\length(H_i(C\otimes_AW_q^{d-1}))$
%\end{remark}

\begin{theorem}
\label{thm:MainLech}
Let $(R,\m)\to (S,\n)$ be a flat local extension of local rings. Suppose $(R,\m)$ is a standard graded ring over a perfect field $k$ localized at the homogeneous maximal ideal. Then $e(R)\leq e(S)$.
\end{theorem}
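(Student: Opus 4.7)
The plan is to reduce to the setting of \autoref{thm:limUlrichimpliesLech}, at which point \autoref{thm:MainLimUlrich} will supply the required weakly lim Ulrich sequence. I proceed through four standard reductions.

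First, by the classical Lech reduction (\cite{Lechnoteonmultiplicity}; see also the preliminaries of \cite{MaLechconjectureindimensionthree}), we may assume $\dim R = \dim S$, via modding $S$ out by a suitable regular sequence in $\n$ lifting a system of parameters of the closed fiber $S/\m S$. If $\charact k = 0$, the standard reduction-mod-$p$ technique descends the data $(R \to S)$ to a finitely generated $\mathbb{Z}$-subalgebra $\Lambda \subseteq k$ and passes to a generic closed fiber $(R_\kappa, \m_\kappa) \to (S_\kappa, \n_\kappa)$ in characteristic $p > 0$, preserving all Hilbert--Samuel multiplicities and dimensions, with $R_\kappa$ still a standard graded ring over the finite perfect field $\kappa$; hence we may assume $\charact k = p > 0$ with $k$ perfect. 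Since $k$ is perfect, the field $k(t)$ is both infinite and $F$-finite (indeed $[k(t) : k(t)^p] = p$). The ring $R \otimes_k k(t)$ is a standard graded $k(t)$-algebra whose localization at the homogeneous maximal ideal $\m \otimes_k k(t)$ coincides with the gauge extension $R[t]_{\m[t]}$; replacing $R$ and $S$ by $R[t]_{\m[t]}$ and $S[t]_{\n[t]}$ respectively yields a new flat local extension that preserves all dimensions and multiplicities, with $R$ now the localization of a standard graded ring over the infinite $F$-finite field $k(t)$.

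Next I reduce to the case $R$ is a domain via associativity of Hilbert--Samuel multiplicity. Let $P_1, \ldots, P_r$ be the minimal primes of $R$ with $\dim R/P_i = \dim R$; each is homogeneous, so $R/P_i$ is a standard graded domain over $k(t)$. Flatness of $R \to S$ together with $\dim R = \dim S$ force each induced map $R/P_i \to S/P_i S$ to be flat local of the same dimension $\dim R/P_i$. Associativity of multiplicity yields $e(R) = \sum_i \ell_{R_{P_i}}(R_{P_i})\, e(R/P_i)$. On the $S$-side, every maximal-dimensional minimal prime $Q$ of $S$ contracts to some $P_i$ (via going-down for flat maps together with the bound $\dim S/Q \leq \dim S/P S = \dim R/P$, which forces $\dim R/P = \dim R$), and the length formula for flat local extensions of Artinian rings gives $\ell_{S_Q}(S_Q) = \ell_{R_{P_i}}(R_{P_i}) \cdot \ell_{(S/P_iS)_Q}((S/P_iS)_Q)$; combined with associativity applied to each $S/P_iS$, this yields $e(S) = \sum_i \ell_{R_{P_i}}(R_{P_i})\, e(S/P_iS)$. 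Thus it suffices to establish $e(R/P_i) \leq e(S/P_iS)$ for each $i$, and so we may assume $R$ is itself a standard graded domain over an infinite $F$-finite field of characteristic $p > 0$, with $\dim R = \dim S$. At this point \autoref{thm:MainLimUlrich} produces a weakly lim Ulrich sequence over $R$, and \autoref{thm:limUlrichimpliesLech} delivers $e(R) \leq e(S)$.

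The main technical obstacle I anticipate is the reduction mod $p$ step: $R$ is essentially of finite type over $k$ and descends immediately to $\Lambda$, but $S$ is not assumed essentially of finite type over $R$, so descent of $S$ requires additional care. The usual workaround is to first replace $S$ by its completion (which preserves flatness and multiplicity), then apply Cohen's structure theorem together with Artin-style approximation to produce an essentially-of-finite-type flat local replacement of $S$ with the same multiplicity; descent to $\Lambda$ and passage to a generic closed fiber then proceed routinely via generic flatness and preservation of Hilbert--Samuel multiplicities under specialization.
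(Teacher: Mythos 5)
Your proposal is correct and follows essentially the same route as the paper: reduce to a flat local extension of equal dimension with $R$ a standard graded domain over an infinite $F$-finite field of characteristic $p>0$, then combine \autoref{thm:MainLimUlrich} with \autoref{thm:limUlrichimpliesLech}. The paper packages the domain/dimension reduction and the characteristic-$0$ descent (completion, passage to a module-finite extension after adjusting residue fields, Artin approximation to a pointed \'etale/finite-flat model, and reduction to a generic closed fiber in characteristic $p\gg 0$) by citing \cite[Lemma 2.2 and Section 5]{MaLechconjectureindimensionthree}, which is precisely the workaround you sketch for the fact that $S$ is not essentially of finite type.
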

\begin{proof}
Since every minimal prime of $R$ is homogeneous, by the same argument as in \cite[Lemma 2.2]{MaLechconjectureindimensionthree}, we may assume $(R,\m)$ is a standard graded domain and $\dim R=\dim S$. We can further assume that $k$ is infinite and $F$-finite by replacing $R$ and $S$ by $R[t]_{\m R[t]}$ and $S[t]_{\n S[t]}$. The conclusion in characteristic $p>0$ now follows from \autoref{thm:limUlrichimpliesLech} and \autoref{thm:MainLimUlrich} (note that we only need to assume $k$ is $F$-finite).

Next we suppose $k$ has characteristic $0$ and $R\to S$ is a counter-example to the theorem. Then $\widehat{R}\to \widehat{S}$ is a flat local extension with $e(\widehat{R})>e(\widehat{S})$. Applying the argument in \cite[Lemma 5.1]{MaLechconjectureindimensionthree}, we may assume $k\cong R/\m\cong S/\n$ is algebraically closed and $\widehat{R}\to \widehat{S}$ is module-finite (note that $R$ is still standard graded over $k$). Now applying the reduction procedure in \cite[Subsection 5.1]{MaLechconjectureindimensionthree}\footnote{In \cite{MaLechconjectureindimensionthree}, we are not assuming $R$ is the completion of a finite type algebra therefore we choose a complete regular local ring $A$ inside $R$ and descend data to the Henselization of the localization of a polynomial ring, while here $R$ is finite type (in fact standard graded) over $k$ so we can run the same argument over $R$, the counter-example then descends to the Henselization of $R_\m$ and thus to a pointed \'{e}tale extension of $R_\m$.}, there exists a pointed \'{e}tale extension $R'$ of $R_\m$ and a finite flat extension $S'$ of $R'$ such that $e(R)=e(R')> e(S)$. But then by inverting elements if necessary, we may assume that we have
$$R\to R''=\left(\frac{R[x]}{f}\right)_g \to S''$$
such that $R''$ is standard \'e{t}ale over $R$ near a maximal ideal $\m''$ lying over $\m$, $R''\to S''$ is finite flat with a maximal ideal $\n''\in S''$ lying over $\m''$, and that $e(\m, R)=e(\m'', R'')>e(\n'', S'')$. We can reduce this set up to characteristic $p\gg0$ as in \cite[Subsection 5.2]{MaLechconjectureindimensionthree} to obtain
$$R_\kappa \to R''_\kappa \to S''_\kappa$$
with $\n''_\kappa$ a maximal ideal of $S''_\kappa$ lying over the homogeneous maximal ideal $\m_\kappa$ of $R_\kappa$, such that $(R_\kappa)_{\m_\kappa}\to (S''_\kappa)_{\n''_\kappa}$ is flat and $e((R_\kappa)_{\m_\kappa})>e((S''_\kappa)_{\n''_\kappa})$ (note that $R_\kappa \to R''_\kappa$ is always flat since $f$ is a monic polynomial in $x$). Thus we arrive at a counter-example (with $(R_\kappa, \m_\kappa)$ standard graded over an $F$-finite field $\kappa$) in characteristic $p>0$, which is a contradiction.
\end{proof}

\begin{remark}
Given \autoref{thm:MainLech}, one might try to consider attacking Lech's conjecture by passing to the associated graded rings. However, for a flat local extension $(R,\m)\to (S,\n)$, the induced map $gr_{\m}(R)\to gr_{\n}(S)$ of associated graded rings need not be flat (e.g., when $\m \subseteq \n^2$, then this map sends the homogeneous maximal ideal of $gr_{\m}(R)$ to zero). Therefore our \autoref{thm:MainLech} does not imply Lech's conjecture in general.
\end{remark}

Finally, we mention that in \cite{BhattHochsterMaLimCohenMacaulaysequence}, it is proven that every $F$-finite complete local domain of characteristic $p>0$ admits a lim Cohen--Macaulay sequence $\{F^e_*R\}$, which follows from standard methods in tight closure theory \cite{HochsterHunekeTC1}.

In an early version of this manuscript, we have asked whether every $F$-finite complete local domain of positive characteristic admits a lim Ulrich sequence. However, Yhee \cite{YheeUlrichDoNotExist} subsequently found an example of a two-dimensional complete local domain that does not admit even a weakly lim Ulrich sequence. Nevertheless, to the best of our knowledge, the following question remains open:

\begin{question}
\label{question:existence}
Does every $F$-finite complete normal (or Cohen--Macaulay) domain of characteristic $p>0$ admit a lim Ulrich sequence, or at least a weakly lim Ulrich sequence?
\end{question}

\bibliographystyle{skalpha}
\bibliography{CommonBib}

\providecommand{\bysame}{\leavevmode\hbox to3em{\hrulefill}\thinspace}
\providecommand{\MR}{\relax\ifhmode\unskip\space\fi MR}
% \MRhref is called by the amsart/book/proc definition of \MR.
\providecommand{\MRhref}[2]{%
  \href{http://www.ams.org/mathscinet-getitem?mr=#1}{#2}
}
\providecommand{\href}[2]{#2}
\begin{thebibliography}{BRW05}

\bibitem[BHM]{BhattHochsterMaLimCohenMacaulaysequence}
{\sc B.~Bhatt, M.~Hochster, and L.~Ma}: \emph{Lim {C}ohen-{M}acaulay
  sequences}, in preparation.

\bibitem[BHU87]{BrennanHerzogUlrichMaximallygeneratedCohenMacaulayModules}
{\sc J.~P. Brennan, J.~Herzog, and B.~Ulrich}: \emph{Maximally generated
  {C}ohen-{M}acaulay modules}, Math. Scand \textbf{61} (1987), no.~2, 181--203.

\bibitem[BH93]{BrunsHerzogCM}
{\sc W.~Bruns and J.~Herzog}: \emph{Cohen-{M}acaulay rings}, Cambridge Studies
  in Advanced Mathematics, vol.~39, Cambridge University Press, Cambridge,
  1993. {\sf\scriptsize 1251956}

\bibitem[BRW05]{BrunsRomerWiebeDeterminantalUlrich}
{\sc W.~Bruns, T.~R\"{o}mer, and A.~Wiebe}: \emph{Initial algebras of
  determinantal rings, {C}ohen-{M}acaulay and {U}lrich ideals}, Michigan Math.
  J. \textbf{53} (2005), no.~1, 71--81. {\sf\scriptsize 2125534}

\bibitem[EE17]{EisenbudErmanJEMS}
{\sc D.~Eisenbud and D.~Erman}: \emph{Categorified duality in
  {B}oij-{S}\"{o}derberg theory and invariants of free complexes}, J. Eur.
  Math. Soc. (JEMS) \textbf{19} (2017), no.~9, 2657--2695. {\sf\scriptsize
  3692884}

\bibitem[ES09]{EisenbudSchreyerJAMS}
{\sc D.~Eisenbud and F.-O. Schreyer}: \emph{Betti numbers of graded modules and
  cohomology of vector bundles}, J. Amer. Math. Soc. \textbf{22} (2009), no.~3,
  859--888. {\sf\scriptsize 2505303}

\bibitem[ESW03]{EisenbudSchreyerWeymanResultantsChowforms}
{\sc D.~Eisenbud, F.-O. Schreyer, and J.~Weyman}: \emph{Resultants and {C}how
  forms via exterior syzygies}, J. Amer. Math. Soc. \textbf{16} (2003), no.~3,
  537--579. {\sf\scriptsize 1969204}

\bibitem[GW78]{GotoWatanabeOngradedringsI}
{\sc S.~Goto and K.~Watanabe}: \emph{On graded rings {I}}, J. Math. Soc. Japan
  \textbf{30} (1978), no.~2, 179--213.

\bibitem[Han99]{HanesThesis}
{\sc D.~Hanes}: \emph{Special conditions on maximal {C}ohen-{M}acaulay modules,
  and applications to the theory of multiplicities}, Thesis, University of
  Michigan (1999).

\bibitem[Han01]{HanesLengthapproximationsforIndependentlygeneratedideals}
{\sc D.~Hanes}: \emph{Length approximations for independently generated
  ideals}, J. Algebra \textbf{237} (2001), no.~2, 708--718.

\bibitem[Han05]{HanesCohenMacaulaymodulesGradedrings}
{\sc D.~Hanes}: \emph{On the {C}ohen-{M}acaulay modules of graded subrings},
  Trans. Amer. Math. Soc. \textbf{357} (2005), no.~2, 735--756. {\sf\scriptsize
  2095629}

\bibitem[Her94]{HerzogBKodairaSpencermap}
{\sc B.~Herzog}: \emph{Kodaira-{S}pencer maps in local algebra},
  Springer-Verlag, Berlin, 1994.

\bibitem[HUB91]{HerzogUlrichBacklinLinearMCMoverStrictcompleteintersections}
{\sc J.~Herzog, B.~Ulrich, and J.~Backelin}: \emph{Linear maximal
  {C}ohen-{M}acaualy modules over strict complete intersections}, J. Pure Appl.
  Algebra \textbf{71} (1991), no.~2-3, 187--202.

\bibitem[Hoc73]{HochsterCMmodules}
{\sc M.~Hochster}: \emph{Cohen-{M}acaulay modules}, Conference on {C}ommutative
  {A}lgebra ({U}niv. {K}ansas, {L}awrence, {K}an., 1972), 1973, pp.~120--152.
  Lecture Notes in Math., Vol. 311. {\sf\scriptsize 0340251}

\bibitem[Hoc17]{HochsterHomologicalConjectureLimCM}
{\sc M.~Hochster}: \emph{Homological conjectures and lim {C}ohen-{M}acaulay
  sequences}, Homological and computational methods in commutative algebra,
  Springer INdAM Ser., vol.~20, Springer, Cham, 2017, pp.~173--197.
  {\sf\scriptsize 3751886}

\bibitem[HH90]{HochsterHunekeTC1}
{\sc M.~Hochster and C.~Huneke}: \emph{Tight closure, invariant theory, and the
  {B}rian\c con-{S}koda theorem}, J. Amer. Math. Soc. \textbf{3} (1990), no.~1,
  31--116.

\bibitem[HS06]{HunekeSwansonBook}
{\sc C.~Huneke and I.~Swanson}: \emph{Integral closure of ideals, rings, and
  modules}, London Mathematical Society Lecture Note Series, vol. 336,
  Cambridge University Press, Cambridge, 2006. {\sf\scriptsize 2266432}

\bibitem[IMW21]{IyengarMaWalkerMultiplicity}
{\sc S.~Iyengar, L.~Ma, and M.~Walker}: \emph{Multiplicities and {B}etti
  numbers in local algebra via lim {U}lrich points}, arXiv:2104.10140, to
  appear in Algebra Number Theory.

\bibitem[IMW22]{IyengarMaWalkerBS}
{\sc S.~Iyengar, L.~Ma, and M.~Walker}: \emph{Lim {U}lrich sequences and the
  {B}oij-{S}\"{o}derberg cones}, in preparation.

\bibitem[Lec60]{Lechnoteonmultiplicity}
{\sc C.~Lech}: \emph{Note on multiplicities of ideals}, Ark. Mat \textbf{4}
  (1960), 63--86.

\bibitem[Lec64]{Lechinequalitiesofflatcouples}
{\sc C.~Lech}: \emph{Inequalities related to certain couples of local rings},
  Acta. Math \textbf{112} (1964), 69--89.

\bibitem[Ma14]{MaThesis}
{\sc L.~Ma}: \emph{The {F}robenius endomorphism and multiplicities}, Thesis,
  University of Michigan (2014).

\bibitem[Ma17]{MaLechconjectureindimensionthree}
{\sc L.~Ma}: \emph{Lech's conjecture in dimension three}, Adv. Math.
  \textbf{322} (2017), 940--970. {\sf\scriptsize 3720812}

\bibitem[Mat86]{Matsumura86}
{\sc H.~Matsumura}: \emph{Commutative ring theory}, Cambridge Univ. Press,
  Cambridge, 1986.

\bibitem[Men21]{MengStronglyLechindependent}
{\sc C.~Meng}: \emph{Strongly {L}ech-independent ideals and {L}ech's
  conjecture}, arXiv:2112.09849.

\bibitem[Nag62]{Nagata62}
{\sc M.~Nagata}: \emph{Local rings}, Wiley, New York, 1962.

\bibitem[Sac20]{SaccochiThesis}
{\sc R.~C. Saccochi}: \emph{Ulrich {S}chur {B}undles in {P}rime
  {C}haracteristic p}, ProQuest LLC, Ann Arbor, MI, 2020, Thesis
  (Ph.D.)--University of Illinois at Chicago. {\sf\scriptsize 4257373}

\bibitem[Ser65]{SerreLocalAlgebra}
{\sc J.-P. Serre}: \emph{Alg\`{e}bre locale. {M}ultiplicit\'{e}s}, Cours au
  Coll\`{e}ge de France, 1957--1958, r\'{e}dig\'{e} par Pierre Gabriel. Seconde
  \'{e}dition, 1965. Lecture Notes in Mathematics, 11, Springer-Verlag,
  Berlin-New York, 1965, pp.~vii+188 pp.

\bibitem[Ulr84]{UlrichGorensteinringsandModuleswithHighgenerators}
{\sc B.~Ulrich}: \emph{Gorenstein rings and modules with high numbers of
  generators}, Math. Z. \textbf{188} (1984), no.~1, 23--32. {\sf\scriptsize
  767359}

\bibitem[Yhe21]{YheeUlrichDoNotExist}
{\sc F.~Yhee}: \emph{Ulrich modules and weakly lim {U}lrich sequences do not
  always exist}, arXiv:2104.05766.

\end{thebibliography}
\end{document}